\def\vertex{\circle*{1.8}}
\begin{document}

\title{The maximum weight stable set problem in
$(P_6,\mbox{bull})$-free graphs\thanks{The authors are partially
supported by ANR project STINT (reference ANR-13-BS02-0007).}}

\titlerunning{MWSS in $(P_6,\mbox{bull})$-free graphs}

\author{Fr\'ed\'eric Maffray\inst{1} \and Lucas Pastor\inst{2}}

\authorrunning{Fr\'ed\'eric Maffray and Lucas Pastor}

\institute{CNRS, Laboratoire G-SCOP, University of Grenoble-Alpes,
France  \and Laboratoire G-SCOP, University of Grenoble-Alpes,
France}

\maketitle

\begin{abstract}
We present a polynomial-time algorithm that finds a maximum weight
stable set in a graph that does not contain as an induced subgraph an
induced path on six vertices or a bull (the graph with vertices $a, b,
c, d, e$ and edges $ab, bc, cd, be, ce$).
    
\medskip

{\it Keywords}: Stability, $P_6$-free, bull-free, polynomial time,
algorithm
\end{abstract}

\date{\today}

\section{Introduction}

In a graph $G$, a \emph{stable set} (also called \emph{independent}
set) is any subset of pairwise non-adjacent vertices.  The {\sc
maximum stable set problem} (henceforth MSS) is the problem of finding
a stable set of maximum size.  In the weighted version of this
problem, each vertex $x$ of $G$ has a weight $w(x)$, and the weight of
any subset of vertices is defined as the total weight of its elements.
The {\sc maximum weight stable set problem} (MWSS) is then the
problem of finding a stable set of maximum weight.  It is well-known
that MSS (and consequently MWSS) is NP-hard in general, even under
various restrictions \cite{GarJoh}.

Given a fixed graph $F$, a graph $G$ \emph{contains} $F$ when $F$ is
isomorphic to an induced subgraph of $G$.  A graph $G$ is said to be
\emph{$F$-free} if it does not contain $F$.  Let us say that $F$ is
\emph{special} if every component of $F$ is a tree with no vertex of
degree at least four and with at most one vertex of degree three.
Alekseev \cite{Alex} proved that MSS remains NP-complete in the class
of $F$-free graphs whenever $F$ is not special.  On the other hand,
when $F$ is a special graph, it is still an open problem to know if
MSS can be solved in polynomial time in the class of $F$-free graphs
for most instances of $F$.  It is known that MWSS is polynomial-time
solvable in the class of $F$-free graphs when $F$ is any special graph
on at most five vertices \cite{alex2,LVV,LMo}.  Hence the new frontier
to explore now is the case where $F$ has six or more vertices.

We denote by $P_n$ the path on $n$ vertices.  The complexity
(polynomial or not) of MSS in the class of $P_6$-free graph is still
unknown, but it has recently been proved that it is quasi-polynomial
\cite{LPvL}.  There are several results on the existence of
polynomial-time algorithms for MSS in subclasses of $P_6$-free graphs;
see for example \cite{K,KM,RM1999,RM2009,RM2013}.  

The \emph{bull} is the graph with five vertices $a,b,c,d,e$ and edges
$ab$, $bc$, $cd$, $be$, $ce$ (see Figure~\ref{fig:bull}).  Our main
result is the following.

\begin{figure}[ht]
\unitlength=0.08cm
\thicklines
\begin{center}  
\begin{picture}(24,12) 
\multiput(6,0)(12,0){2}{\vertex}
\multiput(0,12)(24,0){2}{\vertex}
\put(12,9){\vertex}
\put(6,0){\line(1,0){12}}
\put(18,0){\line(1,2){6}} \put(6,0){\line(-1,2){6}}
\put(12,9){\line(2,-3){6}}\put(12,9){\line(-2,-3){6}}
\end{picture}
\end{center}
\caption{The bull.}
\label{fig:bull}
\end{figure}
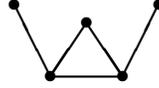

\begin{theorem}\label{thm:main}
MWSS can be solved in time $O(n^7)$ for every graph on $n$ vertices in
the class of $(P_6,\mbox{bull})$-free graphs.
\end{theorem}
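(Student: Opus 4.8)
\noindent\emph{Proof idea.} The plan is to combine two classical reductions — modular decomposition and decomposition by clique cutsets — with a structure theorem for the graphs that survive both. Call a graph an \emph{atom} if it is connected, has no clique cutset, and has no homogeneous set (module) whose size is strictly between $1$ and the order of the graph. We first reduce MWSS on a $(P_6,\mbox{bull})$-free graph $G$ to MWSS on $(P_6,\mbox{bull})$-free atoms. If $G$ is disconnected, we treat its components separately. If $G$ has a homogeneous set $H$ with $1<|H|<|V(G)|$, we recursively compute the maximum weight $\alpha$ of a stable set of $G[H]$ and recurse on the graph $G'$ obtained from $G$ by replacing $H$ with a single vertex of weight $\alpha$; then the maximum stable-set weight of $G$ equals that of $G'$, an optimal solution lifts back, and both $G[H]$ and $G'$ are isomorphic to induced subgraphs of $G$ (for $G'$, delete from $G$ all of $H$ but one vertex), hence remain $(P_6,\mbox{bull})$-free. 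If $G$ has a clique cutset $C$, with $V(G)\setminus C=A_1\cup A_2$ and no edge between $A_1$ and $A_2$, we recurse on the induced subgraphs $G[A_1\cup C]$ and $G[A_2\cup C]$ and recombine by the standard clique-cutset recursion for MWSS, using that a stable set meets the clique $C$ in at most one vertex. Since modular decomposition is computable in linear time and Tarjan's clique-cutset decomposition produces $O(n)$ pieces of total size $O(n^2)$ in polynomial time, iterating these reductions terminates with a polynomial-size family of $(P_6,\mbox{bull})$-free atoms, on which it remains to solve MWSS.

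The heart of the argument is a structure theorem for such atoms, which we would prove by a neighbourhood analysis: every $(P_6,\mbox{bull})$-free atom is either perfect or has independence number at most $2$ (or, more cautiously, is perfect, has bounded independence number, or lies in a short explicit list of families). To prove it, fix a vertex $v$ and study the partition of $V(G)$ into $N[v]$ and $R:=V(G)\setminus N[v]$. Since $G$ is $P_6$-free, every component of $G[R]$ attaches to $N(v)$ at distance two and no long induced path can escape $N[v]$; since $G$ is bull-free, the attachments across the cut are tightly controlled — given a triangle meeting both sides of the cut, a vertex having exactly two neighbours on an adjacent pair of that triangle is forced into a restricted, ``twin-like'' pattern. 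Primality and the absence of a clique cutset then kill the degenerate configurations. When $G$ has no induced $C_5$ one argues that $G$ is Berge — in a $P_6$-free graph the only possible odd hole is $C_5$ (any $C_k$ with $k\ge 7$ contains an induced $P_6$), and every odd antihole has independence number $2$, so it would fall in the second alternative — and invokes the Strong Perfect Graph Theorem. When $G$ does contain an induced $C_5$, bull- and $P_6$-freeness so severely restrict the neighbourhoods of the vertices outside it that, after clique cutsets and homogeneous sets have been excluded, the independence number cannot exceed $2$ (or the graph lands in the short explicit list).

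Granting the structure theorem, MWSS on an atom $G$ is immediate: if $\alpha(G)\le 2$ we enumerate all single vertices and all non-edges in time $O(n^2)$; the finitely many remaining graphs or bounded families are solved directly; and if $G$ is perfect we run the polynomial-time algorithm for MWSS in perfect graphs (Gr\"otschel--Lov\'asz--Schrijver). Adding up over the polynomially many atoms, and accounting for the decomposition bookkeeping and the perfect-graph subroutine, yields the claimed $O(n^7)$ running time. The main obstacle is precisely the structure theorem, and within it the case where $G$ has an induced $C_5$: one must distill, from the combined force of $P_6$- and bull-freeness together with the non-existence of a clique cutset or a homogeneous set, a genuinely finite — and clean — description of how the vertices outside a fixed induced $C_5$ may attach to it and to one another. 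The reductions to atoms, the perfect-graph subroutine, and the running-time accounting are routine by comparison.
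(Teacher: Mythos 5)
Your outline gets the first reduction right (modular decomposition to prime graphs, via Lozin--Milani\v{c}, is exactly what the paper does), but the heart of your argument --- the structure theorem for atoms --- is neither stated precisely nor proved, and in the form you conjecture it is false. Consider the graph on $\{c_1,\dots,c_5,d_1,d_2,x\}$ where $c_1\cdots c_5$ is a $5$-cycle, $d_1$ is adjacent to $c_1,c_4,x$, $d_2$ is adjacent to $c_2,c_5,x$, and there are no other edges. This graph is triangle-free (hence bull-free), one can check it is $P_6$-free and prime, and being triangle-free and $2$-connected it has no clique cutset; yet it contains $C_5$ (so it is not perfect) and $\{x,c_1,c_3\}$ is a stable set of size $3$. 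So ``perfect or $\alpha\le 2$'' fails for atoms, and the escape hatch of a ``short explicit list'' does not help: the troublesome graphs form an infinite family. Indeed, every $(P_6,\mbox{triangle})$-free graph is $(P_6,\mbox{bull})$-free, and prime imperfect members of that class with large independence number exist in abundance. This is precisely why the paper does not attempt your trichotomy. Its actual structure theorem (Theorem~\ref{thm:structure}) says something different: in a prime $(P_6,\mbox{bull})$-free graph containing a $G_7$, the set $F$ of $4$-neighbours of the $5$-cycle is a clique and $G\setminus F$ is \emph{triangle-free}. The algorithmic payoff then comes not from perfection or bounded independence number but from the bounded clique-width of $(P_6,\mbox{triangle})$-free graphs (Brandst\"adt--Klembt--Mahfud), applied to $G\setminus F$ and to $G\setminus(\{f\}\cup N(f))$ for each $f\in F$. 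Your proposal never identifies this tool, and without it (or a substitute) there is no way to finish on the imperfect atoms.

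Two further remarks. First, the clique-cutset decomposition you add is not needed in the paper and does not rescue the structure theorem (the example above survives it); interleaving it with modular decomposition also requires some care to keep the number and total size of the pieces polynomial, though that part is standard. Second, for the perfect case you invoke Gr\"otschel--Lov\'asz--Schrijver, which gives polynomiality but not the claimed $O(n^7)$; the paper instead observes that the relevant perfect pieces are bull-free and uses the combinatorial $O(n^6)$ algorithm of Penev (or de~Figueiredo--Maffray), and moreover it only needs perfection of the components of the non-neighbourhood of a vertex (established via the Strong Perfect Graph Theorem together with the wheel lemma), not of the whole atom. The decisive gap, however, is the missing and incorrectly conjectured structure theorem, which you yourself flag as ``the main obstacle'': as written, the proposal is a plan whose central step does not hold.
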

Our proof of this theorem works along the following lines.  First, we
can reduce the problem to prime graphs, using modular decomposition
(all the technical terms will be defined precisely below).  Next, we
will show that if a prime $(P_6,\mbox{bull})$-free graph $G$ contains
a certain graph $G_7$, then $G$ has a structure from which we can
solve MWSS in polynomial time on $G$.  Finally, we will show that if a
prime $(P_6,\mbox{bull})$-free graph $G$ contains no $G_7$, then the
non-neighborhood of any vertex $x$ is perfect, which implies that a
maximum weight stable set containing $x$ can be found in polynomial
time, and it suffices to repeat this for every vertex $x$.

\medskip

Let us recall some definitions and results we need.  Let $G$ be a
graph.  For each vertex $v\in V(G)$, we denote by $N(v)$ the set of
vertices adjacent to $v$ (the \emph{neighbors} of $v$) in $G$.  For
any subset $S$ of $V(G)$ we write $N_S(v)$ instead of $N(v)\cap S$;
and for a subgraph $H$ we write $N_H(v)$ instead of $N_{V(H)}(v)$.  We
denote by $G[S]$ the induced subgraph of $G$ with vertex-set $S$, and
we denote by $N(S)$ the set $\{v\in V(G)\setminus S\mid$ $v$ has a
neighbor in $S\}$.  The complement of $G$ is denoted by
$\overline{G}$.  We say that a vertex $v$ is \emph{complete} to $S$ if
$v$ is adjacent to every vertex in $S$, and that $v$ is
\emph{anticomplete} to $S$ if $v$ has no neighbor in $S$.  For two
sets $S,T\subseteq V(G)$ we say that $S$ is complete to $T$ if every
vertex of $S$ is adjacent to every vertex of $T$, and we say that $S$
is anticomplete to $T$ if no vertex of $S$ is adjacent to any vertex
of $T$.

Let $\omega(G)$ denote the maximum size of a clique in $G$, and let
$\chi(G)$ denote the chromatic number of $G$ (the smallest number of
colors needed to color the vertices of $G$ in such a way that no two
adjacent vertices receive the same color).  A graph $G$ is
\emph{perfect} \cite{ber60,ber61,ber85} if every induced subgraph $H$
of $G$ satisfies $\chi(H)=\omega(H)$.  By the Strong Perfect Graph
Theorem \cite{CRST}, a graph is perfect if and only if $G$ and
$\overline{G}$ contain no induced $\ell$-cycle for any odd $\ell\ge
5$.
 
In a graph $G$ a \emph{homogeneous set} is a set $S\subseteq V(G)$
such that every vertex in $V(G)\setminus S$ is either complete to $S$
or anticomplete to $S$.  A homogeneous set is \emph{proper} if it
contains at leat two vertices and is different from $V(G)$.  A graph
is \emph{prime} if it has no proper homogeneous set.  Note that prime
graphs are connected.

A class of graphs is \emph{hereditary} if, for every graph $G$ in the
class, every induced subgraph of $G$ is also in the class.  For
example, for any family ${\cal F}$ of graphs, the class of ${\cal
F}$-free graphs is hereditary.  We will use the following theorem of
Lozin and Milani\v{c} \cite{LM}.
\begin{theorem}[\cite{LM}]\label{thm:LM}
Let $\cal{G}$ be a hereditary class of graphs.  Suppose that there is
a constant $c \geq 1$ such that the MWSS problem can be solved in time
$O(|V(G)|^c)$ for every prime graph $G$ in $\cal{G}$.  Then the MWSS
problem can be solved in time $O(|V(G)|^c + |E(G)|)$ for every graph
$G$ in $\cal{G}$.
\end{theorem}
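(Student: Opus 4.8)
The plan is to prove this by \emph{modular decomposition}. First I would compute the modular decomposition tree $T$ of $G$, using one of the well-known linear-time algorithms, so that this step costs $O(n+m)$. Recall that the leaves of $T$ are the vertices of $G$, that each internal node $t$ is associated with a homogeneous set $M_t\subseteq V(G)$ (the set of leaves below $t$), and that the children of $t$ form a partition of $M_t$ into homogeneous sets of $G[M_t]$; the \emph{quotient} $Q_t$ is the graph obtained from $G[M_t]$ by contracting each child into a single vertex. Each internal node is \emph{parallel} (with $Q_t$ edgeless), \emph{series} (with $Q_t$ complete), or \emph{prime} (with $Q_t$ a prime graph). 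The structural fact I would rely on is that $Q_t$ is isomorphic to the subgraph of $G$ induced by one representative vertex chosen in each child; hence $Q_t$ is an induced subgraph of $G$, and since $\mathcal{G}$ is hereditary we have $Q_t\in\mathcal{G}$.

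The heart of the argument is a substitution principle. For a set $S\subseteq V(G)$ let $\beta(S)$ denote the maximum weight of a stable set of $G[S]$. I claim that if the children of $t$ are $M_1,\dots,M_k$ and we give the corresponding vertex $q_i$ of $Q_t$ the weight $\beta(M_i)$, then $\beta(M_t)$ equals the maximum weight of a stable set of the vertex-weighted graph $Q_t$. Indeed, for any stable set $X$ of $G[M_t]$ the indices $i$ with $X\cap M_i\neq\emptyset$ induce a stable set of $Q_t$ (two children joined by an edge of $Q_t$ are complete to each other, so $X$ cannot meet both), while conversely any stable set of $Q_t$ lifts to a stable set of $G[M_t]$ by taking a maximum-weight stable set inside each selected child; comparing weights yields equality. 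I would establish this lemma once and then process $T$ bottom-up: at a parallel node $\beta(M_t)=\sum_i\beta(M_i)$, at a series node $\beta(M_t)=\max_i\beta(M_i)$, and at a prime node $\beta(M_t)$ is computed by running the assumed $O(|V(Q_t)|^c)$ algorithm on the weighted prime graph $Q_t\in\mathcal{G}$. The value $\beta(V(G))$ at the root is the answer, and an optimal stable set is recovered by the same lifting.

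For the running time, computing $T$ costs $O(n+m)$. Each parallel or series node is handled in time proportional to its number of children, and since $T$ has at most $2n-1$ nodes the total number of parent--child incidences is at most $2n-2$, so all degenerate nodes together cost $O(n)$. Let $k_1,\dots,k_r$ be the numbers of children of the prime nodes, so that the assumed algorithm spends $O\!\left(\sum_j k_j^{\,c}\right)$ on them. Since $\sum_j k_j\le 2n-2$ and the map $x\mapsto x^c$ is superadditive for $c\ge 1$, one has $\sum_j k_j^{\,c}\le\bigl(\sum_j k_j\bigr)^{c}=O(n^c)$. Adding the three contributions and using $n=O(n^c)$ gives the claimed bound $O(n^c+m)$.

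The main obstacle I anticipate is not the substitution lemma, which is routine, but getting the bookkeeping exactly right. One must invoke a genuinely \emph{linear-time} modular decomposition algorithm, since a super-linear one would spoil the $+\,m$ term; one must check that every prime quotient is simultaneously prime \emph{and} an induced subgraph of $G$, so that the hypothesis on $\mathcal{G}$ is applicable to it; and one must justify the superadditivity inequality $\sum_j k_j^{\,c}\le\bigl(\sum_j k_j\bigr)^{c}$ that collapses the individual per-node costs into a single $O(n^c)$ term.
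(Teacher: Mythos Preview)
Your argument is correct and is essentially the standard proof via modular decomposition. Note, however, that the paper does not give its own proof of this theorem: it is quoted from \cite{LM} and used as a black box, so there is nothing in the paper to compare your proposal against. What you wrote is precisely the approach of Lozin and Milani\v{c}: build the modular decomposition tree in linear time, observe that each prime quotient is an induced subgraph of $G$ and hence lies in the hereditary class $\mathcal{G}$, solve MWSS bottom-up using the substitution lemma (sum at parallel nodes, max at series nodes, the assumed $O(k^{c})$ oracle at prime nodes with $k$ children), and bound the total cost on prime nodes by $\sum_j k_j^{\,c}\le\bigl(\sum_j k_j\bigr)^{c}=O(n^{c})$. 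The caveats you flag (needing a genuinely linear-time modular decomposition, checking that prime quotients are both prime and induced subgraphs, and the superadditivity of $x\mapsto x^{c}$ for $c\ge 1$) are exactly the points one must verify, and each holds.
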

Clearly, the class of $(P_6,\mbox{bull})$-free graphs is hereditary.
By Theorem~\ref{thm:LM}, in order to prove Theorem~\ref{thm:main} it
suffices to prove it for prime graphs.  This is the object of the
following theorem.

\begin{theorem}\label{thm:gmf}
Let $G$ be a prime $(P_6,\mbox{bull})$-free graph, and let $x$ be any
vertex in $G$.  Suppose that there is a $5$-cycle induced by
non-neighbors of $x$.  Then there is a clique $F$ (possibly empty) in $G$ such that the
induced subgraph $G\setminus F$ is triangle-free, and such a set $F$
can be found in time $O(n^2)$.
\end{theorem}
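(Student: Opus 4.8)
The plan is to analyse $G$ relative to the given $5$-cycle. Write $C=c_1c_2c_3c_4c_5$ (indices modulo $5$) for an induced $5$-cycle with $V(C)\subseteq V(G)\setminus N(x)$, i.e.\ with $x$ anticomplete to $C$. The first step is a lemma describing, for every vertex $u\notin V(C)$, the possible shapes of $N_C(u)$. Since $G$ is bull-free, $N_C(u)$ cannot be exactly one edge $\{c_i,c_{i+1}\}$ of $C$: otherwise $c_{i-1},c_i,c_{i+1},u,c_{i+2}$ induces a bull (triangle $uc_ic_{i+1}$, pendant $c_{i-1}$ at $c_i$, pendant $c_{i+2}$ at $c_{i+1}$, and $c_{i-1}\not\sim c_{i+2}$); the same configuration, now using $\{c_i,c_{i+1},c_{i+3}\}$, shows $N_C(u)$ cannot induce an edge of $C$ plus one more vertex. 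Consequently, up to the dihedral symmetry of $C$, the set $N_C(u)$ is one of $\emptyset$, $\{c_1\}$, $\{c_1,c_3\}$, $\{c_1,c_2,c_3\}$, $\{c_1,c_2,c_3,c_4\}$, or $V(C)$, and this partitions $V(G)\setminus V(C)$ into the corresponding classes.

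Next I would prune these classes using $P_6$-freeness, primeness, and crucially the vertex $x$. Since $G$ is prime, $V(C)$ is not a homogeneous set, so at least one vertex has between one and four neighbours on $C$. Chasing induced $P_6$'s gives strong restrictions: for instance no neighbour $y$ of $x$ can have a single neighbour on $C$ (else $x,y,c_i,c_{i+1},c_{i+2},c_{i+3}$ is an induced $P_6$), and likewise none can see exactly one edge of $C$. Iterating such arguments — through $x$, along $C$, and through the partially attached vertices — I expect to reduce to a tightly constrained configuration in which the surviving partial attachments have a restricted form and the adjacencies between the surviving classes (the vertices seeing four or five vertices of $C$, the ``single'' vertices, the non-edge-pair vertices, and $Y=V(G)\setminus(V(C)\cup N(V(C)))$, which contains $x$) are completely described. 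The outcome is a description of $G$ relative to $C$ precise enough to control where triangles occur.

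With that in hand I would locate the triangles and extract $F$. First, no triangle of $G$ meets $V(C)$ in two vertices, since that would force a vertex with edge-type attachment, which is excluded; so every triangle meets $V(C)$ in at most one vertex. From the structural description, every triangle of $G$ either already lies in a triangle-free part of $G$ or contains a vertex complete to some fixed edge $c_ic_{i+1}$ of $C$; taking $F=N(c_i)\cap N(c_{i+1})$ for that edge (and $F=\emptyset$ when $G$ is already triangle-free) then yields a set meeting all triangles. Such an $F$ is a clique by bull-freeness: if $u,u'\in F$ were non-adjacent, the diamond on $\{u,u',c_i,c_{i+1}\}$ together with one or two further vertices of $C$, chosen using that $u,u'$ are not complete to $C$, produces an induced bull. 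Finally, since $C$ has only five edges and there are only a few candidate sets, and since testing whether $G$ minus a candidate is triangle-free costs $O(n^2)$, the whole search runs in time $O(n^2)$.

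The main obstacle is the middle step: pinning down the vertices adjacent to four or five vertices of $C$. These are exactly the vertices that can lie in triangles, and bull-freeness alone does not determine how they attach to each other or to the rest of $G$; the argument must combine primeness, $P_6$-freeness and the anticomplete vertex $x$ in a fairly delicate case analysis, and the bookkeeping there is where the real work lies.
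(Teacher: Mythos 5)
Your overall strategy is the right one and matches the paper's: classify the vertices outside $C$ by their neighborhood on $C$, use the anticomplete vertex $x$ together with primeness and $P_6$-freeness to kill the $3$- and $5$-neighbors, and then take $F$ to be the set of $4$-neighbors (which, once one knows they all miss the same vertex $c_5$, is exactly $N(c_1)\cap N(c_2)$ outside $C$, your candidate set). But the proposal has two genuine gaps, one of which is an incorrect step rather than just an omission.

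First, your argument that $F$ is a clique does not work. If $u,u'$ are non-adjacent $4$-neighbors, the structure forces both to be complete to $\{c_1,\dots,c_4\}$ and anticomplete to $c_5$, so there is no asymmetry to exploit: one checks directly that no five-vertex subset of $\{u,u',c_1,\dots,c_5\}$ induces a bull (the candidate sets give a $K_{2,3}$, a $C_4$ plus a pendant, or graphs with six or more edges, never a bull). Bull-freeness alone is not enough here. The paper instead uses primeness: an anticomponent $F'$ of $G[F]$ with $|F'|\ge 2$ cannot be a homogeneous set, so some vertex $t$ distinguishes two non-adjacent vertices of $F'$, and a case analysis on whether $t$ is a $2$-neighbor, a $1$-neighbor, or a non-neighbor of $C$ (each case producing a bull or a $P_6$ through $x$ and $d$) yields the contradiction.

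Second, the two hardest facts are asserted but not proved. (a) Excluding $5$-neighbors requires more than a $P_6$ chase: the paper needs a separate lemma stating that a bull-free graph containing an ``umbrella'' (a $5$-wheel plus a pendant at the center) has a proper homogeneous set, whose proof is itself a non-trivial connectivity/induction argument; excluding $3$-neighbors requires yet another homogeneous-set argument around the component of vertices attached like the $3$-neighbor. (b) The claim that every triangle of $G\setminus F$ would have to meet $F$ --- i.e.\ that $G\setminus F$ is triangle-free --- is the longest part of the paper's proof: one takes a shortest path from $C$ to a hypothetical triangle $R$ in $G\setminus F$, minimizes its length, and runs a case analysis on the attachment type of the endpoint, again invoking primeness via the component of $G[N(u)]$ containing the other two triangle vertices. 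You correctly identify this middle step as ``where the real work lies,'' but that work is the theorem; as written the proposal establishes only the easy preliminary classification (the analogue of the paper's first lemma on $2$- and $3$-neighbors) and the final bookkeeping.
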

The proof of Theorem~\ref{thm:gmf} is given in the next section.  We
close this section by showing how to obtain a proof of
Theorem~\ref{thm:main} on the basis of Theorem~\ref{thm:gmf}.

\medskip

Our algorithm relies on results concerning graphs of bounded
clique-width.  We will not develop all the technical aspects concerning
the clique-width, but we recall its definition and the results that we
use.  The concept of clique-width was first introduced in~\cite{CER}.
The \emph{clique-width} of a graph $G$ is defined as the minimum
number of labels which are necessary to generate $G$ by using the
following operations:
\begin{itemize}
\item 
Create a vertex $v$ labeled by integer $i$.  
\item 
Make the disjoint union of two labeled graphs.  
\item 
Join all vertices with label $i$ to all vertices with label $j$ for
two labels $i \neq j$.  
\item 
Relabel all vertices of label $i$ by label $j$.
\end{itemize}
A \emph{$c$-expression} for a graph $G$ of clique-width $c$ is a
sequence of the above four operations that generate $G$ and use at
most $c$ different labels.  A class of graphs $\mathcal{C}$ has
\emph{bounded clique-width} if there exists a constant $c$ such that
every graph $G$ in $\mathcal{C}$ has clique-width at most $c$.

\begin{theorem}[\cite{CMR}]\label{thm:msol}
If a class of graphs $\mathcal{C}$ has bounded clique-width $c$, and
there is a function $f$ such that for every graph $G$ in $\mathcal{C}$
with $n$ vertices and $m$ edges a $c$-expression can be found in time
$O(f(n, m))$, then the maximum weight stable set problem can be
solved in time $O(f(n, m))$ for every graph $G$ in $\mathcal{C}$.
\end{theorem}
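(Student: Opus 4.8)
The plan is to solve the problem directly by dynamic programming along a $c$-expression of the input graph; the only thing used from the hypothesis is that such an expression is available in time $O(f(n,m))$. We may freely assume $f(n,m)=\Omega(n)$, since an expression for an $n$-vertex graph already contains at least $n$ vertex-creation operations.

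First I would compute a $c$-expression $\sigma$ for $G$ in time $O(f(n,m))$; as this is the running time, $\sigma$ has length $O(f(n,m))$, and its parse tree has $O(f(n,m))$ nodes, each being a vertex creation (a leaf), a disjoint union (binary), a join $\eta_{i,j}$, or a relabel $\rho_{i\to j}$ (both unary). For a node $t$ let $G_t$ denote the labelled graph built by the sub-expression at $t$, with label classes $V_1(G_t),\dots,V_c(G_t)$, and for $L\subseteq\{1,\dots,c\}$ define $A_t(L)$ to be the maximum weight of a stable set $S$ of $G_t$ whose set of occupied labels $\{\,i:S\cap V_i(G_t)\neq\emptyset\,\}$ is exactly $L$, with $A_t(L)=-\infty$ if no such $S$ exists. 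Then the maximum weight of a stable set of $G$ is $\max_L A_{\mathrm{root}}(L)$, and storing back-pointers in the table allows one to output an optimal stable set as well.

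The recurrences are immediate from the four operations. If $t$ creates a vertex $v$ with label $i$, then $A_t(\emptyset)=0$, $A_t(\{i\})=w(v)$, and $A_t(L)=-\infty$ otherwise. If $t$ is the disjoint union of $G_{t_1}$ and $G_{t_2}$, then $A_t(L)=\max_{L_1\cup L_2=L}\bigl(A_{t_1}(L_1)+A_{t_2}(L_2)\bigr)$, because there are no edges across the union and the occupied labels of $S_1\cup S_2$ are the union of those of $S_1$ and $S_2$. If $t$ applies $\eta_{i,j}$ to $G_{t_1}$, then $A_t(L)=-\infty$ whenever $\{i,j\}\subseteq L$ (since $V_i$ and $V_j$ become completely adjacent, no stable set can occupy both) and $A_t(L)=A_{t_1}(L)$ otherwise. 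If $t$ applies $\rho_{i\to j}$ to $G_{t_1}$, which changes no edges, then $A_t(L)=-\infty$ if $i\in L$; $A_t(L)=A_{t_1}(L)$ if $i\notin L$ and $j\notin L$; and $A_t(L)=\max\bigl(A_{t_1}(L),\ A_{t_1}(L\cup\{i\}),\ A_{t_1}((L\setminus\{j\})\cup\{i\})\bigr)$ if $i\notin L$ and $j\in L$, depending on whether a stable set occupying label $j$ in $G_t$ uses only vertices of old label $j$, vertices of both old labels, or only vertices of old label $i$. A routine induction on the structure of $\sigma$ shows that $A_t$ is computed correctly.

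Since $c$ is a fixed constant, each node is processed in $O(3^c)=O(1)$ time, so the dynamic program runs in time $O(f(n,m))$; together with the $O(f(n,m))$ spent producing $\sigma$ this gives the claimed bound. The only delicate point is the relabel step --- correctly enumerating the finitely many label profiles in $G_{t_1}$ that can collapse onto a prescribed profile in $G_t$ --- but it causes no real difficulty; the genuinely hard part of any result of this kind, constructing a bounded-width expression, is exactly what is assumed here. (Alternatively one could appeal to the theorem of Courcelle, Makowsky and Rotics that every $\mathrm{MSO}_1$-definable optimization problem, of which ``maximum weight stable set'' is an obvious instance, is solvable in linear time on a graph supplied with a $c$-expression; the argument above is simply that special case carried out by hand.)
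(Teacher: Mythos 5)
Your proof is correct. The paper itself gives no argument for this statement: it is quoted verbatim as a known result of Courcelle, Makowsky and Rotics \cite{CMR}, whose original proof goes through the general metatheorem that every $\mathrm{MSO}_1$-expressible optimization problem is linear-time solvable on graphs given with a $c$-expression. What you have done instead is carry out the special case for maximum weight stable set by hand: a dynamic program over the parse tree of the $c$-expression whose state at a node $t$ is, for each set $L$ of labels, the maximum weight of a stable set of $G_t$ occupying exactly the labels in $L$. I checked your four recurrences and they are all right; in particular the relabel case $\rho_{i\to j}$ with $i\notin L$, $j\in L$ correctly enumerates the three preimage profiles $L$, $L\cup\{i\}$ and $(L\setminus\{j\})\cup\{i\}$, and the join case is sound because the only edges added by $\eta_{i,j}$ run between $V_i$ and $V_j$, so a stable set avoiding one of those two label classes survives unchanged. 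Your bookkeeping of the running time (the expression has $O(f(n,m))$ nodes because it is produced in that time, each node costs $O(1)$ for fixed $c$ --- in fact $O(4^c)$ at a union node rather than $O(3^c)$, which is immaterial) is also fine. The trade-off is the usual one: your explicit argument is elementary and self-contained but specific to this one problem, whereas the citation buys the result for a whole family of $\mathrm{MSO}_1$-definable problems at once, which is all the paper needs since the genuinely hard ingredient here --- producing a bounded-width expression for $(P_6,\mbox{triangle})$-free graphs --- is supplied separately by Theorem~\ref{thm:BKM}.
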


A \emph{triangle} is a complete graph on three vertices.
\begin{theorem}[\cite{BKM}]\label{thm:BKM}
The class of $(P_6, \mbox{triangle})$-free graphs has bounded
clique-width $c$, and a $c$-expression can be found in time
$O(|V(G)|^2)$ for every graph $G$ in this class.
\end{theorem}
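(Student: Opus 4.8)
The plan is to follow the two-stage strategy that is standard for proving bounded clique-width, and then to check that every step is constructive in quadratic time. First I would reduce to prime graphs by means of modular decomposition. The clique-width of a graph is bounded by a function of the maximum clique-width of the prime quotient graphs occurring in its modular decomposition tree, and the tree itself can be computed in linear time; moreover, from bounded-label expressions for the prime quotients one assembles a bounded-label expression for $G$ using the union, join and relabel operations. Since the $(P_6,\mbox{triangle})$-free graphs form a hereditary class, it therefore suffices to produce, for every \emph{prime} such graph $G$, a $c$-expression with $c$ a universal constant, in time $O(|V(G)|^2)$.

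For the prime case I would first establish a clean dichotomy. A key elementary observation is that every connected $P_6$-free graph has diameter at most $4$: a shortest path is always induced, so a shortest path on six vertices would be an induced $P_6$. Consequently a BFS from any vertex produces at most five layers $L_0,\dots,L_4$. Next, using triangle-freeness, a non-bipartite $(P_6,\mbox{triangle})$-free graph must contain an induced $C_5$: in a triangle-free graph a shortest odd cycle is chordless, hence induced, and has length at least $5$; and any induced cycle $C_\ell$ with $\ell\ge 7$ contains an induced $P_6$ (delete one vertex). Thus a prime $(P_6,\mbox{triangle})$-free graph is either bipartite or contains an induced $C_5$, and these two cases can be separated in time $O(|V(G)|^2)$ by a bipartiteness test followed, if needed, by the extraction of a shortest odd cycle.

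In the bipartite branch I would exploit the layered structure: since $G$ is bipartite the BFS layers contain no internal edges and edges run only between consecutive layers, so $G$ is a stack $L_0\!-\!L_1\!-\!L_2\!-\!L_3\!-\!L_4$ of at most four consecutive bipartite graphs. I would then show that $P_6$-freeness restricts the bipartite adjacency between consecutive layers to a bounded-clique-width type (each vertex falling into one of boundedly many neighbourhood classes with respect to its adjacent layers), so that the whole stack is generated with a bounded number of labels, giving a constant clique-width bound. In the $C_5$ branch I would fix an induced $C_5$ on $v_1,\dots,v_5$ and partition the remaining vertices according to their trace $N(u)\cap\{v_1,\dots,v_5\}$. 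Triangle-freeness forbids two consecutive neighbours on the $C_5$, so each trace is an independent set of the $C_5$ and there are at most $11$ classes. The remaining task is to control the adjacency inside each class and between classes; here $P_6$-freeness, together with the presence of the fixed $C_5$, is used to show that each class induces a graph of bounded clique-width and that the bipartite graphs between classes are of a restricted (chain-like) type, so that a bounded-label expression can be built class by class.

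The main obstacle is exactly this last point in the $C_5$ branch: unlike the reduction and the dichotomy, which are routine, the detailed case analysis that pins down the internal structure of the neighbourhood classes and their mutual adjacencies is where the real work lies, and it is the heart of the argument of \cite{BKM}. Once the structure is in hand, the construction is constructive: computing the modular decomposition, testing bipartiteness or finding a $C_5$, computing the BFS layers or the neighbourhood classes, and emitting the corresponding operations can all be performed within $O(|V(G)|^2)$ time, the number of emitted operations being $O(|V(G)|+|E(G)|)=O(|V(G)|^2)$, which yields the claimed time bound.
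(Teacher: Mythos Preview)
The paper does not prove this theorem at all: it is quoted verbatim from \cite{BKM} as a known result and used as a black box (together with Theorem~\ref{thm:msol}) to obtain Corollary~\ref{cor:boundedcw}. There is therefore no ``paper's own proof'' to compare your proposal against.

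As a sketch of the argument in \cite{BKM}, your outline is broadly faithful: reduce to prime graphs via modular decomposition, split into the bipartite case and the case where an induced $C_5$ exists, and in the latter partition the remaining vertices by their trace on the $C_5$ (triangle-freeness indeed forces each trace to be an independent set of the cycle). You are also right that the substantive work lies in the structural case analysis of the neighbourhood classes and their mutual adjacencies; your bipartite branch, in particular, is currently only a heuristic (``bounded-clique-width type'', ``chain-like'') rather than an argument, and would need the concrete structural lemmas from \cite{BKM} to become a proof. For the purposes of the present paper, however, none of this is required: the statement is simply invoked, not reproved.
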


Hence, as observed in \cite{BKM}, Theorems~\ref{thm:msol} 
and~\ref{thm:BKM} imply the following.
\begin{corollary}[\cite{BKM}]\label{cor:boundedcw}
For any $(P_6, \mbox{triangle})$-free graph $G$ on $n$ vertices one
can find a maximum weight stable set of $G$ in time $O(n^2)$.
\end{corollary}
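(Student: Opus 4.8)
The plan is to derive this corollary directly by composing the two immediately preceding theorems, since all the genuine combinatorial and algorithmic work has already been packaged into their statements. First I would fix $\mathcal{C}$ to be the class of $(P_6,\mbox{triangle})$-free graphs. As already observed in the excerpt, for any family $\mathcal{F}$ the class of $\mathcal{F}$-free graphs is hereditary, so $\mathcal{C}$ is hereditary and both cited theorems are applicable to it.

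Next I would invoke Theorem~\ref{thm:BKM} to obtain a constant $c$ together with two guarantees: that every $G\in\mathcal{C}$ has clique-width at most $c$, and that a $c$-expression for any such $G$ can be computed in time $O(|V(G)|^2)$. The key bookkeeping step is then to match this running time against the hypothesis of Theorem~\ref{thm:msol}. Concretely, I would set $f(n,m)=n^2$, so that the premise of Theorem~\ref{thm:msol}, namely that ``a $c$-expression can be found in time $O(f(n,m))$'', is satisfied by exactly the bound furnished by Theorem~\ref{thm:BKM}. With this choice the two theorems are tied together with the same class $\mathcal{C}$ and the same constant $c$.

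Finally I would apply Theorem~\ref{thm:msol} with this choice of $\mathcal{C}$, $c$, and $f$. Its conclusion states that MWSS can be solved in time $O(f(n,m))=O(n^2)$ for every $G\in\mathcal{C}$, which is precisely the assertion of the corollary. There is no substantive obstacle in this argument: the only point requiring any attention is the purely formal check that the notion of clique-width and the constant $c$ are the same in both theorems, so that the $c$-expression produced by Theorem~\ref{thm:BKM} is a legitimate input to the dynamic-programming routine underlying Theorem~\ref{thm:msol}. Once the $c$-expression is available, the maximum weight stable set is computed by the standard bottom-up evaluation over the expression, whose cost is absorbed into the $O(n^2)$ term, and the corollary follows immediately.
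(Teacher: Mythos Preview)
Your proposal is correct and follows exactly the approach of the paper, which simply observes that Theorems~\ref{thm:msol} and~\ref{thm:BKM} together imply the corollary. The paper gives no further details, so your explicit choice of $f(n,m)=n^2$ and verification that the hypotheses match is, if anything, slightly more detailed than the original.
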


A \emph{$k$-wheel} is a graph that consists of a $k$-cycle plus a
vertex (called the center) adjacent to all vertices of the cycle.  The
following lemma was proved for $k\ge 7$ in~\cite{RSb}; actually the
same proof holds for all $k\ge 6$ as observed in \cite{FMP}.
\begin{lemma}[\cite{RSb,FMP}]\label{lem:wheel}
Let $G$ be a bull-free graph.  If $G$ contains a $k$-wheel for any
$k\ge 6$, then $G$ has a proper homogeneous set.
\end{lemma}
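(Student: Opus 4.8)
The plan is to prove Lemma~\ref{lem:wheel}: in a bull-free graph $G$ containing a $k$-wheel with $k\ge 6$, exhibit a proper homogeneous set. Let the cycle be $c_1c_2\cdots c_kc_1$ and let $h$ be the center, adjacent to all $c_i$. The key observation I would exploit is that any vertex $v\notin\{h,c_1,\dots,c_k\}$ is heavily constrained by the bull-free condition relative to this wheel. The main object of study is therefore the neighborhood pattern $N(v)\cap\{c_1,\dots,c_k\}$ together with the edge $vh$ (present or not), and the goal is to show these patterns are so restricted that the center $h$ together with the cycle behaves like the ``inside'' of a homogeneous set, or that the whole wheel can be grown into one.

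First I would analyze, for a vertex $v$ adjacent to $h$, its neighbors on the cycle. If $v$ has two cyclically consecutive non-neighbors $c_{i-1},c_{i+1}$ but is adjacent to $c_i$, then $\{c_{i-1},c_i,c_{i+1},v,h\}$ risks forming a bull (with $h$ adjacent to all three cycle vertices and $v$ adjacent only to $c_i$ and $h$); I would check each such local configuration and show it forces $v$ to be complete or anticomplete to short arcs of the cycle. Iterating around the cycle, the upshot should be that every vertex adjacent to $h$ is either complete to the whole cycle or has a very rigid ``interval'' neighborhood that, again by bull-freeness combined with $k\ge 6$ (so arcs are long enough to produce the forbidden induced bull), collapses to the complete case. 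Symmetrically, for $v$ non-adjacent to $h$, I would argue that $v$ is either anticomplete to the cycle or complete to it, using $h$ as the apex of the potential bull. The case split on whether $v$ is adjacent to $h$ is the organizational backbone.

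Having classified outside vertices, I would assemble the homogeneous set. The natural candidate is the set $S$ of all vertices that are complete to $\{c_1,\dots,c_k\}$ — this contains $h$, and by the classification every vertex outside the cycle is either in $S$ (complete to the cycle) or anticomplete to the cycle. One then checks that $S$, or a suitable modification such as $S\cup\{\text{cycle}\}$ or the cycle itself, is homogeneous: each vertex of $G$ not in the set sees all or none of it. The threshold $k\ge 6$ enters precisely to guarantee the resulting set is proper, i.e.\ has at least two vertices ($h$ and at least one $c_i$, or several complete vertices) and is not all of $V(G)$ (bull-freeness keeps some anticomplete vertex outside unless the graph is tiny, which a $6$-wheel already precludes for the trivial degeneracies).

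The hard part, I expect, is the exhaustive local bull-analysis establishing that an outside vertex cannot have a ``mixed'' partial neighborhood on the long cycle — ruling out the intermediate patterns where $v$ sees some but not all of $\{c_1,\dots,c_k\}$ in a way that neither forms an induced bull nor extends to completeness. This requires carefully enumerating the adjacency of $v$ to consecutive triples of cycle vertices and checking that every surviving pattern is homogeneous-compatible; the length $k\ge 6$ is what makes enough disjoint triples available to force the contradiction, and getting the boundary cases (short arcs, $v$ adjacent to exactly one or two cycle vertices) correct is where the bookkeeping is most delicate.
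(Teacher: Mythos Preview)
The paper does not give a proof of this lemma; it is quoted from \cite{RSb,FMP}. Your outline, however, has a genuine gap independent of that.

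Your central structural claim --- that in a bull-free graph every vertex outside the wheel must be either complete or anticomplete to the cycle $\{c_1,\ldots,c_k\}$ --- is false. Take the $6$-wheel with center $h$ and cycle $c_1\cdots c_6$, and add a vertex $u$ adjacent precisely to $h,c_1,c_2,c_3$. This graph is bull-free (indeed $u$ and $c_2$ are true twins, so the graph is obtained from the bull-free $6$-wheel by duplicating a vertex), yet $u$ has a strictly partial attachment to the cycle. In this example none of your candidate sets works: the cycle is not homogeneous (because of $u$); the set $S$ of vertices complete to the cycle is the singleton $\{h\}$; and $S\cup\{c_1,\ldots,c_6\}$ is not homogeneous either (again because of $u$). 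The proper homogeneous set that \emph{does} exist here is $\{u,c_2\}$, and your scheme never produces it.

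This example also shows that your reading of the threshold $k\ge 6$ is off: it is not there merely to make the resulting set ``proper''; it is needed in the structural case analysis itself. For $k=5$ a prime bull-free graph can contain a $5$-wheel (the $5$-wheel itself is prime and bull-free), which is exactly why the paper needs the separate umbrella lemma, with its extra pendant vertex, to handle the $5$-cycle case.

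The arguments in \cite{RSb,FMP} do begin by classifying the possible attachments of outside vertices to the cycle, but they cannot stop at ``complete or anticomplete''. When a vertex $u$ adjacent to $h$ has an interval-type partial attachment, one must go further: either exhibit a twin pair such as $\{u,c_i\}$, or absorb $u$ into a larger wheel-like configuration and argue that a maximal such configuration is homogeneous. Your sketch stops precisely where this second, harder, step begins.
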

Note that the bull is a self-complementary graph, so the preceding
lemma also says that if $G$ is prime then it does not contain the
complementary graph of a $k$-wheel with $k\ge 6$.

\begin{theorem}\label{thm:fin}
Let $G$ be a prime $(P_6,\mbox{bull})$-free graph on $n$ vertices.
Then a maximum weight stable set of $G$ can be found in time $O(n^7)$.
\end{theorem}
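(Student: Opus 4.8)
The plan is to combine Theorem~\ref{thm:gmf} with Corollary~\ref{cor:boundedcw} by a simple branching over the choice of a vertex in a maximum weight stable set. Fix a prime $(P_6,\mbox{bull})$-free graph $G$ on $n$ vertices. For each vertex $x\in V(G)$, let $M_x=V(G)\setminus(N(x)\cup\{x\})$ be the non-neighborhood of $x$, and let $H_x=G[M_x]$. A maximum weight stable set of $G$ that contains $x$ is obtained by taking $x$ together with a maximum weight stable set of $H_x$; so if we can solve MWSS on each $H_x$ in time $O(n^c)$, we solve MWSS on $G$ in time $O(n^{c+1})$ by iterating over all $x$ and keeping the best answer. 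It therefore suffices to show that MWSS on $H_x$ can be solved in time $O(n^6)$.

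The case analysis for $H_x$ splits on whether $H_x$ contains an induced $5$-cycle. \emph{Case 1:} $H_x$ has no induced $C_5$. Since $G$ is $(P_6,\mbox{bull})$-free, so is $H_x$; moreover by Lemma~\ref{lem:wheel} (in its complementary form) $G$, being prime, contains no complement of a $k$-wheel for $k\ge 6$, and in particular no $\overline{C_7}$. I claim $H_x$ is then perfect: a minimal imperfect graph is, by the Strong Perfect Graph Theorem, an odd hole or odd antihole of length $\ge 5$; a $P_6$-free graph contains no odd hole of length $\ge 7$ and, by hypothesis in this case, no $C_5$; and a $P_6$-free graph contains no $\overline{C_\ell}$ for $\ell\ge 9$ (such an antihole contains an induced $P_6$), while $\overline{C_7}$ is excluded by primeness and $\overline{C_5}=C_5$ is excluded by hypothesis. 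Hence $H_x$ is perfect and MWSS on $H_x$ is solvable in polynomial time, e.g.\ $O(n^4)$ or better via the theta-function or combinatorial perfect-graph algorithms. \emph{Case 2:} $H_x$ contains an induced $C_5$, i.e.\ there is a $5$-cycle induced by non-neighbors of $x$. Then Theorem~\ref{thm:gmf} applies and yields, in time $O(n^2)$, a clique $F\subseteq V(G)$ such that $G\setminus F$ is triangle-free; in particular $H_x\setminus F$ is $(P_6,\mbox{triangle})$-free, so by Corollary~\ref{cor:boundedcw} a maximum weight stable set of $H_x\setminus F$ is found in time $O(n^2)$. To get a maximum weight stable set of $H_x$ itself, branch once more over the (at most $|F|+1\le n+1$) choices for how a stable set meets the clique $F$: either it avoids $F$ entirely, or it contains exactly one vertex $f\in F\cap M_x$, in which case we recurse on $(H_x\setminus F)$ restricted to the non-neighborhood of $f$ within it, which is again $(P_6,\mbox{triangle})$-free and handled by Corollary~\ref{cor:boundedcw} in $O(n^2)$. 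This gives MWSS on $H_x$ in time $O(n^3)$ in Case~2, well within budget.

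Putting the two cases together, MWSS on $H_x$ costs at most $O(n^4)$ (dominated by the perfect-graph algorithm in Case~1), and a test distinguishing the two cases — looking for an induced $C_5$ in $H_x$ — costs $O(n^5)$ by brute force. Iterating over all $n$ choices of $x$ then gives the claimed $O(n^7)$ bound, with room to spare. (Any slightly looser accounting of the perfect-graph MWSS step or of the $C_5$-detection step still lands at $O(n^7)$, which is presumably why that exponent is what appears in Theorem~\ref{thm:main}.)

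The main obstacle is the perfectness argument in Case~1: one must check carefully that primeness plus $(P_6,\mbox{bull})$-freeness plus the absence of an induced $C_5$ in $H_x$ really does kill every odd hole and every odd antihole. The odd-hole side is easy ($C_5$ excluded by hypothesis, $C_7$ and longer contain $P_6$). The odd-antihole side is the delicate part: $\overline{C_9}$ and longer must be shown to contain an induced $P_6$ (a short direct check), and $\overline{C_7}$ must be excluded — here one invokes that $\overline{C_7}$ is the complement of a $7$-wheel minus\ldots\ actually $\overline{C_7}$ is the complement of the $7$-cycle, which is $7$-regular's complement; more to the point $\overline{C_7}$ contains a $6$-wheel? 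No — the cleanest route is to observe $\overline{C_7}$ is $(P_6,\mbox{bull})$-free but \emph{not} prime only if\ldots\ in fact $\overline{C_7}$ \emph{is} prime, so one instead notes $\overline{C_7}$ contains an induced bull or relies on Lemma~\ref{lem:wheel}'s complementary statement applied to a $k$-wheel with $k=6$ hidden inside. I would settle this by a short explicit argument or citation rather than leave it implicit; everything else is bookkeeping.
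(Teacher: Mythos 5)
Your overall architecture (branch over $x$, split on whether the non-neighborhood of $x$ contains a $C_5$, use Theorem~\ref{thm:gmf} plus Corollary~\ref{cor:boundedcw} in one case and perfection in the other) is exactly the paper's, and your Case~2 is correct. But Case~1 has a genuine gap, and you half-notice it yourself: your argument for excluding odd antiholes does not work, and none of the repairs you float can work either. The independence number of $\overline{C_\ell}$ equals the clique number of $C_\ell$, which is $2$ for $\ell\ge 4$; since $P_6$ (indeed already $P_5$) and the bull each contain a stable set of size $3$, \emph{no} antihole contains an induced $P_6$ or an induced bull. So your claim that $\overline{C_9}$ and longer antiholes contain a $P_6$ is false, and so is the fallback that $\overline{C_7}$ contains a bull; $\overline{C_7}$ also contains no $6$-wheel (its maximum degree is $4$). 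Worse, $\overline{C_7}$ and $\overline{C_9}$ are themselves prime $(P_6,\mbox{bull})$-free graphs, so odd antiholes of length $\ge 7$ cannot be excluded from $G$, or from $H_x$, by any intrinsic argument: you must use the vertex $x$.

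That is the missing idea, and it is how the paper closes the case. If $C$ is an odd antihole of length $\ge 7$ inside the non-neighborhood of $x$, then in $\overline{G}$ the set $V(C)$ induces a cycle of length $\ge 7$ and $x$ is complete to it, so $V(C)\cup\{x\}$ induces a $k$-wheel with $k\ge 7$ in $\overline{G}$. Since the bull is self-complementary, $\overline{G}$ is bull-free, and Lemma~\ref{lem:wheel} gives a proper homogeneous set of $\overline{G}$, hence of $G$, contradicting primeness. (The $5$-antihole is the $5$-cycle, so it is covered by your Case~2 split.) Two smaller points: your ``$O(n^4)$ or better via the theta-function'' bound for MWSS on the perfect case is unsubstantiated --- the paper instead invokes the combinatorial algorithms for \emph{bull-free} perfect graphs (\cite{DM}, or \cite{Penev} in $O(n^6)$), which is what actually makes the final $O(n^7)$ accounting go through; and note that the perfect/\,$C_5$ dichotomy must be applied to the graph $H_x$ (or its components) using $x$ as above, not derived from properties of $G$ alone.
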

\begin{proof}
Let $G$ be a prime $(P_6,\mbox{bull})$-free graph.  Let $w : V(G)
\rightarrow \mathbb{N}$ be a weight function on the vertex set of $G$.
To find the maximum weight stable set in $G$ it is sufficient to
compute, for every vertex $x$ of $G$, a maximum weight stable set
containing $x$.  So let $x$ be any vertex in $G$.  We want to compute
the weight of a maximum stable set containing $x$.  Clearly it
suffices to compute the maximum weight stable set in each component of
the induced subgraph $G\setminus(\{x\}\cup N(x))$ and make the sum
over all components.  Let $K$ be any component of
$G\setminus(\{x\}\cup N(x))$.  We claim that:
\begin{equation}\label{perfectc5}
\mbox{Either $K$ is perfect or it contains a $5$-cycle.}
\end{equation}
Proof: Suppose that $K$ is not perfect.  Note that $K$ contains no odd
hole of length at least $7$ since $G$ is $P_6$-free.  By the Strong
Perfect Graph Theorem $K$ contains an odd antihole $C$.  If $C$ has
length at least $7$ then $V(C)\cup\{x\}$ induces a wheel in
$\overline{G}$, so $G$ has a proper homogeneous set by
Lemma~\ref{lem:wheel}, a contradiction because $G$ is prime.  So $C$
has length $5$, i.e., $C$ is a $5$-cycle.  So (\ref{perfectc5}) holds.

\medskip

We can test in time $O(n^5)$ if $K$ contains a $5$-cycle.  This leads
to the following two cases.

Suppose that $K$ contains no $5$-cycle. Then (\ref{perfectc5}) imples
that $K$ is perfect.  In that case we can use the algorithms from
either \cite{DM} or \cite{Penev}, which compute a maximal weight
stable set in a bull-free perfect graph in polynomial time.  The
algorithm from \cite{Penev} has time complexity $O(n^6)$.  

Now suppose that $K$ contains a $5$-cycle.  Then by
Theorem~\ref{thm:gmf} we can find in time $O(n^2)$ a clique $F$ such
that $G\setminus F$ is triangle-free.  Consider any stable set $S$ in
$K$.  If $S$ contains no vertex from $F$, then $S$ is in the subgraph
$G\setminus F$, which is triangle-free.  By
Corollary~\ref{cor:boundedcw} we can find a maximum weight stable set
$S_F$ in $G\setminus F$ in time $O(n^2)$.  If $S$ contains a vertex
$f$ from $F$, then $S\setminus f$ is in the subgraph $G \setminus
(\{f\}\cup N(f))$, which, since $F$ is a clique, is a subgraph of
$G\setminus F$ and consequently is also triangle-free.  By
Corollary~\ref{cor:boundedcw} we can find a maximum weight stable set
$S'_f$ in $G \setminus (\{f\}\cup N(f))$ in time $O(n^2)$.  Then we
set $S_f=S'_f\cup\{f\}$.  We do this for every vertex $f\in F$.  Now
we need only compare the set $S_F$ and the sets $S_f$ (for all $f\in
F$) and select the one with the largest weight.  This takes time
$O(n^3)$ for each component $K$ that contains a $5$-cycle.  Repeating
the above for every component takes time $O(n^6)$ since the components
are vertex-disjoint.  Repeating this for every vertex $x$, the total
complexity is $O(n^7)$.  \qed
\end{proof}

Now Theorem~\ref{thm:main} follows directly from
Theorems~\ref{thm:fin} and~\ref{thm:LM}.

\section{Proofs}

In a graph $G$, let $H$ be a subgraph of $G$.  For each $k>0$, a
\emph{$k$-neighbor} of $H$ is any vertex in $V(G)\setminus V(H)$ that
has exactly $k$ neighbors in $H$.

\begin{lemma}\label{lem:c5n}
Let $G$ be a bull-free graph.  Let $C$ be an induced $5$-cycle in $G$,
with vertices $c_1, \ldots, c_5$ and edges $c_ic_{i+1}$ for each $i$
modulo $5$.  Then:
\begin{itemize}
\item[(i)]
Every $2$-neighbor of $C$ is adjacent to $c_i$ and $c_{i+2}$ for some
$i$.
\item[(ii)]
Every $3$-neighbor of $C$ is adjacent to $c_i$, $c_{i+1}$ and
$c_{i+2}$ for some $i$.
\item[(iii)]
Every $5$-neighbor of $C$ is adjacent to every $k$-neighbor with
$k\in\{1,2\}$.
\item[(iv)]
If $C$ has a $4$-neighbor non-adjacent to $c_i$ for some $i$, then
every $1$-neighbor of $C$ is adjacent to $c_i$.
\item[(v)]
If a non-neighbor of $C$ is adjacent to a $k$-neighbor of $C$, then
$k\in\{1,2,5\}$.
\end{itemize}
\end{lemma}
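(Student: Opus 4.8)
The plan is to verify each of the five statements by a direct case analysis, exploiting the bull-free hypothesis in the form: whenever we see a potential bull (vertices $a,b,c,d,e$ with edges $ab,bc,cd,be,ce$), one of the "missing" edges must in fact be present. Throughout I will use the indices of $C$ modulo $5$, so $c_i$ is adjacent to $c_{i-1}$ and $c_{i+1}$ and non-adjacent to $c_{i-2}$ and $c_{i+2}$.

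For (i), let $v$ be a $2$-neighbor; up to rotation its two neighbors on $C$ are either $c_1,c_2$ (consecutive) or $c_1,c_3$ (at distance two). In the consecutive case, consider the vertices $v,c_1,c_5,c_4,c_3$: we have edges $vc_1$, $c_1c_5$, $c_5c_4$, $c_4c_3$, and the non-edge $vc_3$; but $v$ is non-adjacent to $c_3$, $c_4$, $c_5$, so $\{v,c_2,c_1,c_5,c_4\}$ with center-edge structure actually forms a bull (with $v$ as the degree-$1$ vertex $d$, $c_1$ as $c$, $c_2$ as one leaf... let me instead pick the clean one): $c_3,c_4,c_5$ is a path, $v$ is adjacent to $c_1$ only among these are wrong — rather use $v,c_2,c_3,c_4,c_5$ where $v c_2$ is an edge, $c_2c_3,c_3c_4,c_4c_5$ is a path, $vc_3,vc_4,vc_5$ are non-edges, and $c_2c_4$ is a non-edge: the vertices $\{c_3,c_4\}$ play the "horns" $b,c$ only if... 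The point is that some choice of five vertices among $v$ and four of the $c_i$ induces a bull, so the consecutive case is impossible; I would simply exhibit that bull. For (ii), a $3$-neighbor has neighbors among $C$ that are either three consecutive ($c_i,c_{i+1},c_{i+2}$), or two consecutive plus one isolated ($c_i,c_{i+1},c_{i+3}$), or pairwise non-consecutive — but on a $5$-cycle any three vertices include two consecutive ones, so only the first two patterns arise, and the second is ruled out by finding a bull (e.g.\ $v$ adjacent to $c_1,c_2,c_4$, not to $c_3,c_5$: then $v,c_4,c_5,c_1,c_2$ or $c_5,c_1,v,c_2,c_3$ gives a bull). For (iii), if $z$ is a $5$-neighbor and $v$ is a $1$- or $2$-neighbor with $zv$ a non-edge, pick neighbors/non-neighbors of $v$ on $C$ to form a path $c_{j-1}c_jc_{j+1}$ with $v$ adjacent to exactly the endpoints' pattern, and use $z$ together with two cycle vertices as the triangle part: the non-adjacency $zv$ plus the structure forces a bull.

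For (iv), suppose $y$ is a $4$-neighbor with $yc_i$ a non-edge, and suppose some $1$-neighbor $v$ of $C$ has its unique neighbor $c_j\neq c_i$. Then $y$ is adjacent to $c_j$ and to $c_{j-1},c_{j+1}$ (the neighbors of $c_j$ on $C$), while $v$ is adjacent to $c_j$ only. If $vy$ is a non-edge, then $v,c_j,y,c_{j-1},c_{j-2}$ (using that $y c_{j-1}$ and $c_{j-1}c_{j-2}$ are edges and $y$ may or may not see $c_{j-2}$) — one works out that a bull appears regardless of whether $yc_{j-2}$ or $yc_{j+2}$ is the missing edge; if $vy$ is an edge, then $v,y,c_{j-2},\ldots$ similarly produces a bull. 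For (v), a non-neighbor $w$ of $C$ adjacent to a $k$-neighbor $v$ of $C$ with $k\in\{3,4\}$: using (i)--(ii) to know exactly which $c_i$ the vertex $v$ sees, and that $w$ sees none of $C$, I form a bull from $w,v$ and three consecutive cycle vertices (if $k=3$, $v$ sees a $P_3$ on $C$; if $k=4$, $v$ misses exactly one $c_i$ and sees a $P_4$, so it sees some $P_3$ whose middle vertex's cycle-neighbors are both seen by $v$): $w$ attaches only to $v$, giving the degree-$1$ horn, and $v$ plus the $P_3$ gives the rest.

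I expect the main obstacle to be part (iv): it is the only statement whose hypothesis mixes a $4$-neighbor and a $1$-neighbor, so one must simultaneously track which single vertex of $C$ the $4$-neighbor misses, which single vertex the $1$-neighbor sees, and whether the two special vertices are adjacent to each other, leading to several sub-cases (depending on the "distance" on $C$ between the missed vertex $c_i$ and the seen vertex $c_j$, and on the adjacency between $y$ and $v$). Parts (i), (ii), (iii) and (v) are comparatively mechanical: each reduces to "every way of placing the neighbors on $C$ inconsistent with the claim yields an explicit induced bull," and once the right five vertices are named the verification is immediate. I would organize the write-up by first proving (i) and (ii) (the "shape" lemmas), then (iii) and (v) (which reuse (i)--(ii)), and finally (iv).
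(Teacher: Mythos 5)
Your overall strategy is exactly the paper's: for each item, locate the forbidden configuration and exhibit an explicit induced bull. The difficulty is that in several places you never actually produce the bull, and in part (i) the candidate vertex sets you do write down are wrong. With $v$ adjacent to $c_1,c_2$ only, both $\{v,c_1,c_5,c_4,c_3\}$ and $\{v,c_2,c_3,c_4,c_5\}$ induce a $P_5$, not a bull: any bull here must contain \emph{both} neighbors $c_1,c_2$ of $v$ (they form the triangle $c_1vc_2$) together with the two pendants $c_5$ and $c_3$, i.e.\ the set to omit is $c_4$. The correct set $\{c_5,c_1,v,c_2,c_3\}$ is in fact the one you name under part (ii) -- and the paper handles (i) and (ii) simultaneously with this single bull $\{c_{i-1},c_i,x,c_{i+1},c_{i+2}\}$, since both failure modes produce a vertex adjacent to two consecutive $c_i,c_{i+1}$ and to neither of $c_{i-1},c_{i+2}$. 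So the gap in (i) is repairable, but as written the argument does not establish the claim.

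Parts (iii) and (iv) are also only gestured at, and each hides a point you would need to address. In (iii), when $x$ is a $2$-neighbor adjacent to $c_i$ and $c_{i+2}$, the pendant you hang off the $5$-neighbor $u$ must avoid $c_{i+2}$; the paper takes $\{x,c_i,c_{i+1},u,c_{i+3}\}$, where $c_{i+3}$ is non-adjacent to $x$ in both the $1$- and $2$-neighbor cases. In (iv) your two-stage plan (first force the edge between the $1$-neighbor $x$ and the $4$-neighbor $f$, then derive a contradiction) is the right one and matches the paper, which uses the bulls $\{x,c_{i+1},c_{i+2},f,c_{i-1}\}$ and then $\{x,f,c_{i-2},c_{i-1},c_i\}$; note that the paper's symmetry reduction (the $1$-neighbor's unique neighbor is $c_{i+1}$ or $c_{i+2}$, up to reflection) collapses your anticipated sub-case explosion to essentially one computation. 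Part (v) as you describe it is correct and is the paper's bull $\{z,x,c_i,c_{i+1},c_{i+2}\}$. In short: right method, but the write-up must name the five vertices in every case, and for (i) the ones you named do not work.
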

\begin{proof}
If either (i) or (ii) fails, there is a vertex $x$ that is either a
$2$-neighbor adjacent to $c_i$ and $c_{i+1}$ or a $3$-neighbor
adjacent to $c_i$, $c_{i+1}$ and $c_{i+3}$ for some $i$, and then
$\{c_{i-1}, c_i, x, c_{i+1}, c_{i+2}\}$ induces a bull.

(iii) Let $u$ be a $5$-neighbor of $C$ and $x$ be a $k$-neighbor of
$C$ with $k\in\{1,2\}$.  So for some $i$ the vertex $x$ is adjacent to
$c_i$ and maybe to $c_{i+2}$.  Then $u$ is adjacent to $x$, for
otherwise $\{x, c_i, c_{i+1}, u, c_{i+3}\}$ induces a bull.

(iv) Let $f$ be a $4$-neighbor of $C$ non-adjacent to $c_i$.  Suppose
that there is a $1$-neighbor $x$ not adjacent to $c_i$.  So, up to
symmetry, $x$ is adjacent to $c_{i+1}$ or $c_{i+2}$.  Then $x$ is
adjacent to $f$, for otherwise $\{x, c_{i+1}, c_{i+2}, f, c_{i-1}\}$
induces a bull; but then $\{x, f, c_{i-2}, c_{i-1}, c_i\}$ induces a
bull.

(v) Let $z$ be a non-neighbor of $C$ that is adjacent to a
$k$-neighbor $x$ with $k\in\{3,4\}$.  So there is an integer $i$ such
that $x$ is adjacent to $c_i$ and $c_{i+1}$ and not adjacent to
$c_{i+2}$.  Then $\{z,x,c_i,c_{i+1},c_{i+2}\}$ induces a bull.  \qed
\end{proof}

An \emph{umbrella} is a graph that consists of a $5$-wheel plus a
vertex adjacent to the center of the $5$-wheel only.
\begin{lemma}\label{lem:umbr}
Let $G$ be a bull-free graph.  If $G$ contains an umbrella, then $G$
has a homogeneous set (that contains the $5$-cycle of the umbrella).
\end{lemma}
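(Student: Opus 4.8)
The plan is to produce a proper homogeneous set of $G$ that contains the cycle $C=c_1\cdots c_5$ of the umbrella; write $u$ for the centre of the $5$-wheel and $v$ for the remaining vertex, so that $v\sim u$ and $v$ is anticomplete to $V(C)$. I would take $H$ to be the smallest homogeneous set of $G$ containing $V(C)$ — this is well defined, being the result of the usual closure that repeatedly adjoins any vertex having both a neighbour and a non-neighbour in the current set — and then, since $|H|\ge5\ge2$, it suffices to show $H\ne V(G)$, for which it suffices to show $u\notin H$.

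First I would collect, using Lemma~\ref{lem:c5n} together with the vertex $v$, a handful of structural facts. By Lemma~\ref{lem:c5n}(iii) every $1$- or $2$-neighbour of $C$ is adjacent to $u$ and to every $5$-neighbour. Every $3$- or $4$-neighbour $y$ of $C$ is adjacent to $u$ and not to $v$: if $y$ were adjacent to neither, then $\{v,u,c_i,c_{i\pm1},y\}$ would be a bull for a suitable choice with $c_i\in N_C(y)$, $c_{i\pm1}\notin N_C(y)$; and if $y\sim v$, then $\{v,y,c_i,c_{i+1},c_{i-1}\}$ would be a bull for a suitable rotation with $c_i,c_{i+1}\in N_C(y)$, $c_{i-1}\notin N_C(y)$. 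If a non-neighbour $z$ of $C$ has a neighbour $x$ which is a $1$- or $2$-neighbour of $C$, then $z\sim u$, since otherwise $\{z,x,c_i,c_j,u\}$ is a bull (with $c_i\in N_C(x)$, $c_j\notin N_C(x)$, $c_ic_j\notin E(G)$). Similarly a $5$-neighbour of $C$ not adjacent to $u$ is adjacent to every $k$-neighbour of $C$ for $k\in\{1,2,3,4\}$, and if a $1$-neighbour $x$ of $C$ satisfies $x\sim v$ then every $5$-neighbour of $C$ is adjacent to $v$ — both proved by exhibiting an explicit bull through $u$ or $v$ in the contrary case.

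Then I would prove $u\notin H$ by following the closure $V(C)=X_0\subsetneq X_1\subsetneq\cdots$ and showing by induction that every vertex entering the closure is adjacent to $u$; since $u$ is complete to $X_0$, it then stays complete to every $X_i$, is never ``mixed'' on one, and so never enters. For the inductive step, a new vertex $z$ has a neighbour and a non-neighbour in some $X_i\subseteq N(u)$, so $z$ is a $k$-neighbour of $C$; the case $k\in\{1,2,3,4\}$ is settled by the facts above, while if $k=0$ (resp.\ $k=5$) then $z$ has a neighbour $a$ (resp.\ a non-neighbour $b$) in $X_i\setminus V(C)$, which is adjacent to $u$ by induction, and a case analysis on the type of this witness — a $1$- or $2$-neighbour, a $5$-neighbour, or a non-neighbour of $C$ (a $3$- or $4$-neighbour being impossible by Lemma~\ref{lem:c5n}(v) when $z$ is a non-neighbour, and by the facts above when $z$ is a $5$-neighbour) — shows that $z\not\sim u$ would force a bull through $u$ and/or $v$, using the cycle vertices as horns.

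I expect the main obstacle to be precisely this last case analysis: one has to run through every possible type of the witnessing neighbour or non-neighbour and find, in each subcase, a bull whenever $z\not\sim u$; several of these (above all the ones where the witness or $z$ itself is adjacent to $v$) do not fit a single uniform pattern and need the five cycle vertices to be assigned to the bull carefully, and they lean on the structural facts of the second paragraph, whose own verifications are themselves short bull-hunts via Lemma~\ref{lem:c5n}.
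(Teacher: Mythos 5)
Your overall strategy -- grow the minimal homogeneous set containing $V(C)$ and show that the wheel-centre $u$ never enters because every vertex that does enter is adjacent to $u$ -- is sound in principle, and the conclusion of your invariant is in fact true. The cases $k\in\{1,2,3,4\}$ are fine (your bulls for the $3$- and $4$-neighbour facts check out). But there is a genuine gap in the cases $k\in\{0,5\}$: the inductive hypothesis ``the witness is adjacent to $u$'' is too weak to produce the bull you promise. Concretely, take $C=c_1\cdots c_5$, the umbrella $u,v$, a $5$-neighbour $a$ of $C$ with $a\sim u$ and $a\sim v$, and a vertex $z$ anticomplete to $C$ with $z\sim a$ and $z\not\sim u$, $z\not\sim v$. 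One can check directly that the graph on these nine vertices is bull-free (every triangle has all its pendant vertices attached at a single triangle vertex). So in the subcase ``$z$ is a non-neighbour of $C$ whose witness $a$ is a $5$-neighbour,'' no bull through $u$ and/or $v$ exists, and your induction cannot conclude $z\sim u$. The same difficulty recurs when $z$ is a $5$-neighbour whose non-neighbour witness is a $5$-neighbour, and again when $z$ and its witness are both non-neighbours of $C$ (two consecutive ``outside'' vertices give you no triangle to hang a bull on unless you also control the witness's own predecessor). What is really needed is the extra information that a $5$-neighbour lying inside the homogeneous set has \emph{no} neighbour among the non-neighbours of $C$; your invariant does not record this, and proving it is essentially the whole content of the lemma.

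The paper circumvents exactly this obstacle by classifying the $5$-neighbours \emph{before} any induction: $A'$ is the set of $5$-neighbours with a neighbour in $Z$ (the non-neighbours of $C$), $A''$ the set of remaining $5$-neighbours with a non-neighbour in $A'$, and the homogeneous set is the component $H$ of $G\setminus(A'\cup A'')$ containing $C$. Each $b\in A'\cup A''$ comes equipped with a private witness $z\in Z$ (a neighbour of $b$, or of a non-neighbour $a'\in A'$ of $b$), which plays the role you want $v$ to play but with the adjacencies under control. Moreover the paper inducts along a \emph{shortest path} $u_0\cdots u_p$ from $C$ into $H$, so that once the path leaves $N(C)$ a fixed cycle vertex $c_{h+2}$, anticomplete to the whole path, supplies the far horn of the bull $\{c_{h+2},b,u_{j-2},u_{j-1},u_j\}$; your closure order gives you no such anchored structure. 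If you want to salvage your approach you would have to strengthen the invariant to ``adjacent to $u$, and if a $5$-neighbour then anticomplete to $Z$,'' at which point you are reconstructing the paper's $A'/A''$ argument.
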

\begin{proof}
Let $C$ be the $5$-cycle of the umbrella, with vertices $c_1, \ldots,
c_5$ and edges $c_ic_{i+1}$ for all $i$ modulo~$5$.  Let $A$ be the
set of vertices that are complete to $C$, and let $Z$ be the set of
vertices that are anticomplete to $C$.  Let:
\begin{eqnarray*}
A' &=& \{a\in A\mid a \mbox{ has a neighbor in } Z\}. \\
A'' &=& \{a\in A\setminus A'\mid a \mbox{ has a non-neighbor in }
A'\}.
\end{eqnarray*}
By the hypothesis that $C$ is part of an umbrella, we have
$A'\neq\emptyset$.  Let $H$ be the component of $G\setminus (A'\cup
A'')$ that contains $V(C)$.  We claim that:
\begin{equation}\label{apvh}
\mbox{$A'\cup A''$ is complete to $V(H)$.}
\end{equation}
Proof: Pick any $b\in A'\cup A''$ and $u\in V(H)$, and let us prove
that $b$ is adjacent to~$u$.  We use the following notation.  If $b\in
A'$, then $b$ has a neighbor $z\in Z$.  If $b\in A''$, then $b$ has a
non-neighbor $a'\in A'$, and $a'$ has a neighbor $z\in Z$, and $b$ is
not adjacent to $z$, for otherwise we would have $b\in A'$.

By the definition of $H$, there is a shortest path
$u_0$-$\cdots$-$u_p$ in $H$ with $u_0\in V(C)$ and $u_p=u$, and $p\ge
0$.  We know that $b$ is adjacent to $u_0$ by the definition of~$A$.
First, we show that $b$ is adjacent to $u_1$ and finally 
by induction on $j = 2, \ldots p$, we show that $b$ is adjacent to $u_j$.
 
Now suppose that $p\ge 1$.  The vertex $u_1$ is a $k$-neighbor of $C$
for some $k\ge 1$.  If $k\in\{1,2\}$, then $b$ is adjacent to $u_1$ by
Lemma~\ref{lem:c5n}~(iii).  Suppose that $k\in\{3,4\}$.  Then there is
an integer $i$ such that $u_1$ is adjacent to $c_i$ and not to
$c_{i+1}$.  By Lemma~\ref{lem:c5n}~(v), $z$ is not adjacent to $u_1$.
If $b\in A'$, then $b$ is adjacent to $u_1$, for otherwise
$\{z,b,c_{i+1},c_{i},u_1\}$ induces a bull.  If $b\in A''$, then, by
the preceding sentence we know that $a'$ is adjacent to $u_1$; and
then $b$ is adjacent to $u_1$, for otherwise $\{z,a',u_1,u_0,b\}$
induces a bull.  Suppose that $k=5$.  So $u_1\in A$.  Then $u_1$ is
not adjacent to $z$, for otherwise we would have $u_1\in A'$.  If
$b\in A'$, then $b$ is adjacent to $u_1$ for otherwise we would have
$u_1\in A''$.  If $b\in A''$, then, by the preceding sentence we know
that $a'$ is adjacent to $u_1$; and then $b$ is adjacent to $u_1$, for
otherwise $\{z,a',u_1,u_0,b\}$ induces a bull.  

Finally suppose that $p\ge 2$.  So $u_2,\ldots,u_p$ are non-neighbors
of $C$.  Since $u_2\in Z$, we have $k\neq 5$, for otherwise we would
have $u_1\in A'$.  So there is an integer $h$ such that $u_1$ is
adjacent to $c_h$ and not to $c_{h+2}$.  We may assume up to
relabeling that $u_0=c_h$.  It follows that $c_{h+2}$ has no neighbor
in $\{u_0,\ldots,u_p\}$.  Then, by induction on $j=2,\ldots,p$, the
vertex $b$ is adjacent to $u_j$, for otherwise
$\{c_{h+2},b,u_{j-2},u_{j-1},u_j\}$ induces a bull.  So $b$ is
adjacent to $u$.  Thus (\ref{apvh}) holds.

\medskip

Let $R=V(G)\setminus (A'\cup A''\cup V(H))$.  By the definition of
$H$, there is no edge between $V(H)$ and $R$.  By~(\ref{apvh}), $V(H)$
is complete to $A'\cup A''$.  Hence $V(H)$ is a homogeneous set that
contains $V(C)$, and it is proper since $A'\neq \emptyset$.  \qed
\end{proof}

\begin{lemma}\label{lem:c5n2}
Let $G$ be a prime $(P_6,\mbox{bull})$-free graph.  Let $C$ be an
induced $5$-cycle in $G$.  If a non-neighbor of $C$ is adjacent to a
$k$-neighbor of $C$, then $k=2$.
\end{lemma}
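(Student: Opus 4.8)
The plan is to combine Lemma~\ref{lem:c5n}~(v) with Lemma~\ref{lem:umbr} and the primality of $G$. By Lemma~\ref{lem:c5n}~(v), if a non-neighbor $z$ of $C$ is adjacent to a $k$-neighbor $x$ of $C$, then $k\in\{1,2,5\}$, so it remains only to rule out $k=1$ and $k=5$. The case $k=5$ is immediate: if $x$ is a $5$-neighbor of $C$ and $z$ is a non-neighbor of $C$ adjacent to $x$, then $V(C)\cup\{x,z\}$ induces an umbrella (the $5$-wheel is $V(C)\cup\{x\}$ with center $x$, and $z$ is adjacent to the center only), so by Lemma~\ref{lem:umbr} $G$ has a proper homogeneous set, contradicting that $G$ is prime.

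So the core of the argument is the case $k=1$. Suppose for contradiction that some non-neighbor $z$ of $C$ is adjacent to a $1$-neighbor $x$ of $C$; say, up to relabeling, $x$ is adjacent to $c_1$ only. First I would observe that $z$ cannot be adjacent to $c_2$ or $c_5$ (else $z$ is itself a neighbor of $C$), nor to $c_3$ or $c_4$ for the same reason, so $z$ is genuinely anticomplete to $C$. Now look at the path $z\text{-}x\text{-}c_1\text{-}c_2\text{-}c_3\text{-}c_4$: this is a $P_6$ unless there is a chord, and the only possible chords are $zc_1$ (excluded: $z$ anticomplete to $C$), $xc_2, xc_3, xc_4$ (excluded: $x$ is a $1$-neighbor adjacent to $c_1$ only), $zc_2,zc_3,zc_4$ (excluded), or $c_1c_3, c_1c_4, c_2c_4$ (excluded since $C$ is an induced $5$-cycle). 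Hence $\{z,x,c_1,c_2,c_3,c_4\}$ induces a $P_6$, contradicting that $G$ is $P_6$-free. Therefore $k\neq 1$, and combined with the exclusion of $k=5$ we get $k=2$.

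I expect the $k=1$ case to be the ``main'' step, but it is in fact quite short — the whole content is spotting the right $P_6$. The only subtlety is making sure all the chords are genuinely ruled out, which requires nothing more than the definitions of $1$-neighbor, non-neighbor, and induced $5$-cycle. The $k=5$ case is where the hypothesis that $G$ is prime (as opposed to merely $(P_6,\text{bull})$-free) is used, via Lemma~\ref{lem:umbr}; without primality the statement would be false, since a $5$-neighbor with a pendant non-neighbor is perfectly possible in a bull-free graph. One should be slightly careful that Lemma~\ref{lem:umbr} gives a homogeneous set containing $V(C)$ that is proper, and primality then yields the contradiction directly. Altogether the proof is a two-case split: apply Lemma~\ref{lem:c5n}~(v) to get $k\in\{1,2,5\}$, eliminate $k=5$ by the umbrella/primality argument, and eliminate $k=1$ by exhibiting a $P_6$.
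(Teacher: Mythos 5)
Your proof is correct and follows exactly the same route as the paper's: apply Lemma~\ref{lem:c5n}~(v) to restrict to $k\in\{1,2,5\}$, kill $k=1$ with the induced $P_6$ $z$-$x$-$c_1$-$c_2$-$c_3$-$c_4$, and kill $k=5$ via the umbrella and Lemma~\ref{lem:umbr}. Your chord-checking for the $P_6$ and the remark about where primality is needed are just more explicit versions of what the paper does.
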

\begin{proof}
Let $C$ have vertices $c_1, \ldots, c_5$ and edges $c_ic_{i+1}$ for
each $i$ modulo $5$.  Suppose that a non-neighbor $z$ of $C$ is
adjacent to a $k$-neighbor $x$ of $C$.  By Lemma~\ref{lem:c5n}~(v), we
have $k\in\{1,2,5\}$.  If $k=1$, say $x$ is adjacent to $c_i$, then
$z$-$x$-$c_i$-$c_{i+1}$-$c_{i+2}$-$c_{i+3}$ is an induced $P_6$ in
$G$.  If $k=5$, then $V(H)\cup\{x,y\}$ induces an umbrella, so, by
Lemma~\ref{lem:umbr}, $G$ has a proper homogeneous set, a
contradiction.  So $k=2$.  \qed
\end{proof}

Let $G_7$ be the graph with vertex-set $\{c_1, \ldots, c_5,d,x\}$ and
edge-set $\{c_ic_{i+1}\mid$ for all $i \bmod 5\}\cup\{dc_1,dc_4,dx\}$.
See Figure~\ref{fig:g7}.
\begin{figure}
\begin{center}
    \includegraphics{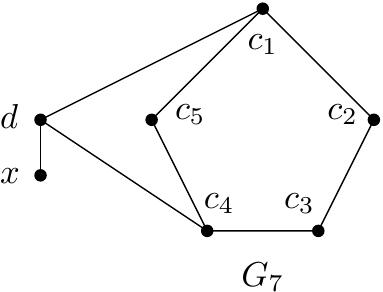}
\end{center}
\caption{The graph $G_7$.}\label{fig:g7}
\end{figure}

\begin{lemma}\label{lem:G7}
Let $G$ be a prime $(P_6,\mbox{bull})$-free graph.  Assume that $G$
contains a $5$-cycle $C$, with vertices $c_1, \ldots, c_5$ and edges
$c_ic_{i+1}$ for all $i\bmod 5$.  Moreover assume that $C$ has a
non-neighbor $x$ in $G$.  Then:
\begin{enumerate} 
\item[(i)]
There is a neighbor $d$ of $x$ that is a $2$-neighbor of $C$.  And
consequently, $V(C)\cup\{d,x\}$ induces a $G_7$.
\item[(ii)]
$C$ has no $3$-neighbor and no $5$-neighbor.
\item[(iii)]
If the vertex $d$ from (i) is (up to symmetry) adjacent to $c_1$ and
$c_4$, then every $4$-neighbor of $C$ is non-adjacent to $c_5$.
\end{enumerate}
\end{lemma}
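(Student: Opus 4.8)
The plan is to analyze, for each vertex $v$ outside the $5$-cycle $C$, how it attaches to $C$, and to derive the three conclusions from the previously established lemmas by forbidding a $P_6$ or a bull. For part~(i), I would start from the non-neighbor $x$ of $C$. Since $G$ is connected (prime graphs are connected) and $x$ is not in $V(C)$, a shortest path from $x$ to $V(C)$ has some vertex $d$ that is the first vertex of that path having a neighbor on $C$; all earlier vertices are anticomplete to $C$, so $d$ has a neighbor that is a non-neighbor of $C$. By Lemma~\ref{lem:c5n2}, $d$ is then a $2$-neighbor of $C$. The delicate point is that I want $d$ to be adjacent to $x$ itself, not just reachable from $x$; to get this I would argue that if the shortest path has length $\ge 2$, say $x$-$\cdots$-$z$-$d$ with $z$ a non-neighbor of $C$ adjacent to the $2$-neighbor $d$, then (after relabeling so that $d$ is adjacent to $c_1$ and $c_3$) the path $z$-$d$-$c_1$-$c_5$-$c_4$-$c_3$ would be an induced $P_6$ unless $z$ has further neighbors on $C$ — but $z$ is a non-neighbor of $C$, contradiction. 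Hence the shortest path has length exactly one, i.e. $x$ is itself adjacent to a $2$-neighbor $d$ of $C$. Then $V(C)\cup\{d,x\}$ induces exactly $G_7$, since $x$ is anticomplete to $C$ and $d$ is a $2$-neighbor adjacent to two vertices at distance two on $C$, which we may label $c_1$ and $c_4$ (equivalently $c_1,c_3$ up to the reflection of the cycle).

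For part~(ii), suppose $C$ has a $3$-neighbor $y$. By Lemma~\ref{lem:c5n}~(ii), $y$ is adjacent to three consecutive vertices of $C$, say $c_1,c_2,c_3$, and non-adjacent to $c_4,c_5$. I would consider the vertex $x$ (the non-neighbor of $C$) together with the $2$-neighbor $d$ from~(i): since $x$ is anticomplete to $C$, Lemma~\ref{lem:c5n2} forces any neighbor of $x$ on $\{\text{$k$-neighbors of }C\}$ to be a $2$-neighbor, so $x$ is non-adjacent to $y$; but then I can try to build a $P_6$ such as $x$-$d$-$c_?$-$\cdots$ reaching into the $c_4,c_5$ part that $y$ misses, or else produce a bull among $\{x,d,y\}$ and two cycle vertices. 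The cleanest route is probably: $y$ is a $3$-neighbor, so by Lemma~\ref{lem:c5n2} $y$ has no non-neighbor-of-$C$ neighbor, in particular $y$ is non-adjacent to $x$; now $y$-$c_1$-$c_5$-$c_4$-$c_3$ extended by a neighbor of $y$ would give a $P_6$, and one checks no vertex can safely fill the position, contradiction. The $5$-neighbor case follows because a $5$-neighbor is complete to $C$ and (with $x$ a non-neighbor of $C$) would create an umbrella unless it is adjacent to $x$, which contradicts Lemma~\ref{lem:c5n2} again; so by Lemma~\ref{lem:umbr} and primeness there is no $5$-neighbor.

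For part~(iii), assume $d$ is adjacent to $c_1$ and $c_4$ (so non-adjacent to $c_2,c_3,c_5$), and suppose $f$ is a $4$-neighbor of $C$ adjacent to $c_5$. By Lemma~\ref{lem:c5n}, a $4$-neighbor misses exactly one cycle vertex, so $f$ misses one of $c_1,\dots,c_4$. I would split into the cases according to which vertex $f$ misses and, in each case, look at the subgraph on $\{x,d,f\}$ together with the appropriate two or three vertices of $C$: since $f$ is adjacent to $c_5$ and $d$ is not, and since $x$ is anticomplete to $C$ while $x$-$d$ is an edge, one of these five-vertex sets induces a bull (typically with $d$ or $f$ as the vertex of the triangle), or else $x$-$d$ together with a path through $f$ and the missed vertex gives an induced $P_6$. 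The main obstacle, and where I expect to spend the most care, is part~(iii): there are several sub-cases (which vertex $f$ misses, and whether $x$ is adjacent to $f$ — the latter controlled by Lemma~\ref{lem:c5n2} since $f$ is neither a $1$- nor a $2$-neighbor, forcing $x$ non-adjacent to $f$), and one must check in each that a forbidden induced subgraph genuinely appears, being careful about which edges among $\{c_i\}$ are present. Once $x\not\sim f$ is pinned down, each sub-case should reduce to exhibiting a concrete bull on vertices drawn from $\{x,d,f,c_1,c_4,c_5\}$ together with the neighbor/non-neighbor pattern of $f$ on $C$.
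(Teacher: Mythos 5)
Your overall strategy for (i) and (iii) matches the paper's (shortest path plus forbidden induced subgraphs; Lemma~\ref{lem:c5n2} plus bull arguments), but the proposal has a genuine gap at the heart of part~(ii), namely the exclusion of $3$-neighbors. A $3$-neighbor $u$ of $C$ (say adjacent to $c_{i-1},c_i,c_{i+1}$) does \emph{not} directly yield a $P_6$ or a bull together with $x$, $d$ and the cycle: your proposed starting path $y$-$c_1$-$c_5$-$c_4$-$c_3$ is not even induced (a $3$-neighbor adjacent to $c_1,c_2,c_3$ has the chord $yc_3$), and ``one checks no vertex can safely fill the position'' is not an argument --- there is no position to fill, since the contradiction must come from the mere existence of $u$. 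The paper's proof needs a second, independent use of primeness here: it forms the set $X$ of vertices complete to $\{c_{i-1},c_{i+1}\}$ and anticomplete to $\{c_{i-2},c_{i+2}\}$, takes the component $Y$ of $G[X]$ containing $c_i$ and $u$, uses the fact that $Y$ cannot be a homogeneous set to produce an attacking vertex $t$ with adjacent $y,z\in Y$, pins down $t$'s attachment to $C$ and shows $x$ has no neighbor in $Y\cup\{t\}$, and only then derives bulls and $P_6$'s in three separate cases $i\in\{5,1,2\}$. Nothing in your plan points toward this mechanism, and without it the $3$-neighbor case does not go through.

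Two further steps fail as written. First, in (i) the claimed $P_6$ $z$-$d$-$c_1$-$c_5$-$c_4$-$c_3$ contains the chord $dc_3$ (with your labeling $d\sim c_1,c_3$); with only two off-cycle vertices no induced $P_6$ arises, because a $2$-neighbor sees both ends of every induced $P_3$ of $C$ through its ``far'' side. The paper instead takes the last \emph{three} off-cycle vertices of the shortest path, giving $x_3$-$x_2$-$x_1$-$c_1$-$c_2$-$c_3$ when $p\ge 3$; this is an easy repair but your version is incorrect. Second, your $5$-neighbor argument is logically backwards: the umbrella $V(C)\cup\{u,x\}$ exists only when $u$ \emph{is} adjacent to $x$, so the case $u\not\sim x$ is left unhandled (and in that case neither Lemma~\ref{lem:umbr} nor Lemma~\ref{lem:c5n2} applies). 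One must first force $u\sim d$ and then $u\sim x$ via two bull arguments (the paper uses $\{d,c_1,c_5,u,c_3\}$ and $\{x,d,c_1,u,c_3\}$), and only then does Lemma~\ref{lem:c5n2} (or the umbrella) give the contradiction; the same two bulls simultaneously prove part~(iii), which your sketch leaves as an unverified case analysis.
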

\begin{proof}
Since $G$ is prime it is connected, so there is a shortest path from
$C$ to $x$ in $G$.  Let $x_0$-$\cdots$-$x_p$ be such a path, where
$x_0\in V(C)$ and $x_p=x$, and $p\ge 2$.  By Lemma~\ref{lem:c5n2},
$x_1$ is a $2$-neighbor of $C$, so up to relabeling we may assume that
$x_1$ is adjacent to $c_1$ and $c_4$.  Then $p=2$ for otherwise
$x_3$-$x_2$-$x_1$-$c_1$-$c_2$-$c_3$ is an induced $P_6$.  So (i) holds
with $d=x_1$.  Clearly, $\{c_1, \ldots, c_5, x_1, x\}$ induces a
$G_7$.

Therefore we may assume, up to symmetry, that the vertex $d$ from (i)
is adjacent to $c_1$ and $c_4$.

Suppose that there is a vertex $u$ that is either a $5$-neighbor of
$C$ or a $4$-neighbor adjacent to $c_5$.  In either case we may
assume, up to symmetry, that $u$ is adjacent to $c_1$, $c_3$ and
$c_5$.  Then $u$ is adjacent to $d$, for otherwise $\{d, c_1, c_5, u,
c_3\}$ induces a bull, and $u$ is adjacent to $x$, for otherwise $\{x,
d, c_1, u, c_3\}$ induces a bull.  But then $u$ and $x$ contradict
Lemma~\ref{lem:c5n2}.  This proves item (iii) and that $C$ has no
$5$-neighbor.

Finally suppose that $C$ has a $3$-neighbor $u$, adjacent to
$c_{i-1}$, $c_i$, $c_{i+1}$; we may assume up to symmetry that
$i\in\{5,1,2\}$.  Let $X$ be the set of vertices that are complete to
$\{c_{i-1}, c_{i+1}\}$ and anticomplete to $\{c_{i-2}, c_{i+2}\}$, and
let $Y$ be the vertex-set of the component of $G[X]$ that contains
$c_i$ and $u$.  Since $G$ is prime, $Y$ is not a homogeneous set, so
there is a vertex $t$ in $V(G)\setminus Y$ and vertices $y,z$ in $Y$
such that $t$ is adjacent to $y$ and not to $z$, and since $Y$ is
connected we may choose $y$ and $z$ adjacent.  We claim that:
\begin{equation}\label{thhh}
\mbox{$t$ is adjacent to $c_{i-2}$ and $c_{i+2}$ and to at least one
of $c_{i-1}$ and $c_{i+1}$.}
\end{equation}
Proof: If $t$ has no neighbor in $\{c_{i-1}, c_{i+1}\}$, then $t$ is
adjacent to $c_{i-2}$, for otherwise $\{t, y, z, c_{i-1}, c_{i-2}\}$
induces a bull, and similarly $t$ is adjacent to $c_{i+2}$; but then
$\{c_{i-1}, c_{i-2}, t, c_{i+2}, c_{i+1}\}$ induces a bull.  Hence $t$
has a neighbor in $\{c_{i-1}, c_{i+1}\}$.  Suppose that $t$ is
adjacent to both $c_{i-1}$ and $c_{i+1}$.  Since $t$ is not in $Y$ it
must have a neighbor in $\{c_{i-2}, c_{i+2}\}$, and actually $t$ is
complete to $\{c_{i-2}, c_{i+2}\}$, for otherwise $t$ is a
$3$-neighbor of the $5$-cycle induced by $\{z, c_{i-1}, c_{i-2},
c_{i+2}, c_{i+1}\}$ that violates Lemma~\ref{lem:c5n}~(ii).  Now
suppose that $t$ is adjacent to exactly one of $c_{i-1},c_{i+1}$, say
up to symmetry to $c_{i-1}$.  Then $t$ is adjacent to $c_{i-2}$, for
otherwise $\{c_{i-2}, c_{i-1}, t, y, c_{i+1}\}$ induces a bull, and
$t$ is adjacent to $c_{i+2}$, for otherwise $\{c_{i+2}, c_{i-2}, t,
c_{i-1}, z\}$ induces a bull.  Thus (\ref{thhh}) holds.

\medskip

Now we claim that:
\begin{equation}\label{xnoy}
\mbox{$x$ has no neighbor in $Y\cup\{t\}$.}
\end{equation}
Proof: Suppose that $x$ has a neighbor in $Y$.  Since $x$ also has a
non-neighbor $c_i$ in $Y$, and $Y$ is connected, there are adjacent
vertices $v,v'$ in $Y$ such that $x$ is adjacent to $v$ and not to
$v'$, and then $\{x, v, v', c_{i-1}, c_{i-2}\}$ induces a bull, a
contradiction.  So $x$ has no neighbor in $Y$.  In particular $x$ is
not adjacent to $y$, so $x$ has no neighbor in the $5$-cycle $C_y$
induced by $\{y, c_{i-1}, c_{i-2}, c_{i+2}, c_{i+1}\}$.  By
(\ref{thhh}), $t$ is a $3$- or $4$-neighbor of $C_y$.  By
Lemma~\ref{lem:c5n2}, $x$ is not adjacent to $t$.  Thus (\ref{xnoy})
holds.

\medskip

Suppose that $i=5$.  By~(\ref{thhh}), $t$ is adjacent to $c_2$ and
$c_3$ and, up to symmetry, to $c_1$.  Then $d$ is not adjacent to $y$,
for otherwise $\{x, d, y, c_1, c_2\}$ induces a bull, and $d$ is not
adjacent to $t$, for otherwise $\{x, d, c_1, t, c_3\}$ induces a bull;
but then $\{d, c_1, y, t, c_3\}$ induces a bull, a contradiction.

Suppose that $i=1$.  By~(\ref{thhh}), $t$ is adjacent to $c_3$ and
$c_4$.  Then $d$ is adjacent to $y$, for otherwise
$x$-$d$-$c_4$-$c_3$-$c_2$-$y$ is an induced $P_6$, and similarly $d$
is adjacent to $z$.  Then $t$ is adjacent to $d$, for otherwise
$\{x,d,z,y,t\}$ induces a bull, and $t$ is adjacent to $c_2$, for
otherwise $\{x,d,t,y,c_2\}$ induces a bull; but then
$\{x,d,c_4,t,c_2\}$ induces a bull.

Finally suppose that $i=2$.  By~(\ref{thhh}), $t$ is adjacent to $c_4$
and $c_5$.  Then $d$ is not adjacent to $y$, for otherwise
$\{x,d,c_1,y,c_3\}$ induces a bull, and $d$ is adjacent to $t$, for
otherwise $\{d,c_4,c_5,t,y\}$ induces a bull; but then
$\{x,d,c_4,t,y\}$ induces a bull, a contradiction.
\qed \end{proof}

\begin{theorem}\label{thm:structure}
Let $G$ be a prime $(P_6,\mbox{bull})$-free graph.  Suppose that $G$
contains a $G_7$, with vertex-set $\{c_1, \ldots, c_5,d,x\}$ and
edge-set $\{c_ic_{i+1}\mid$ for all $i \bmod 5\}\cup\{dc_1,dc_4,dx\}$.
Let:
\begin{itemize}
\item
$C$ be the $5$-cycle induced by $\{c_1, \ldots, c_5\}$;
\item
$F$ be the set of $4$-neighbors of $C$;
\item
$T$ be the set of $2$-neighbors of $C$;
\item
$W$ be the set of $1$-neighbors and non-neighbors of $C$.
\end{itemize}
Then the following properties hold:
\begin{itemize}
\item[(i)]
$V(G) = \{c_1,\ldots,c_5\}\cup F\cup T\cup W$.
\item[(ii)]
$F$ is complete to $\{c_1,\ldots,c_4\}$ and anticomplete to
$\{c_5,x,d\}$.
\item[(iii)]
$F$ is a clique.
\item[(iv)]
$G \setminus F$ is triangle-free.
\end{itemize}
\end{theorem}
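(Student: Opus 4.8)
The first step is to set up coordinates: by Lemma~\ref{lem:G7}~(i), the vertex $d$ is a $2$-neighbor of $C$, and up to symmetry we may assume $d$ is adjacent to $c_1$ and $c_4$. Then Lemma~\ref{lem:G7}~(ii) immediately gives that $C$ has no $3$-neighbor and no $5$-neighbor, and Lemma~\ref{lem:G7}~(iii) gives that every $4$-neighbor of $C$ is non-adjacent to $c_5$; combined with the definition of a $4$-neighbor, this shows $F$ is complete to $\{c_1,c_2,c_3,c_4\}$ and anticomplete to $\{c_5\}$. Since $0$-neighbors and $1$-neighbors are absorbed into $W$ and $2$-neighbors into $T$, part~(i) follows because every vertex outside $V(C)$ is a $k$-neighbor of $C$ for some $k\in\{0,1,2,4\}$. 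For the rest of~(ii), I must check that no $f\in F$ is adjacent to $x$ or to $d$: if $f$ is adjacent to $d$, then since $f$ is anticomplete to $c_5$ and complete to $c_1,\ldots,c_4$, the set $\{f,c_1,c_5,c_4,d\}$ (or a similar five-vertex set using $d$'s two neighbors on $C$ plus $c_5$) should induce a bull; and if $f$ is adjacent to $x$, then $x$ is a non-neighbor of $C$ adjacent to the $4$-neighbor $f$, contradicting Lemma~\ref{lem:c5n2}, which says a non-neighbor of $C$ can only be adjacent to a $2$-neighbor.

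For part~(iii), that $F$ is a clique: take two non-adjacent $f,f'\in F$. Both are complete to $\{c_1,c_2,c_3,c_4\}$ and non-adjacent to $c_5$. Then $\{f,f',c_1,c_5\}$ together with a suitable fifth vertex — or more directly, $\{c_5,c_1,f,f',c_3\}$ or $\{c_1, f, c_5, f', c_4\}$ — should induce a bull (two vertices of degree two inside a path of length four, i.e. $f$–$c_i$–$c_5$–$c_j$–$f'$ pattern won't immediately work since $c_5$ is adjacent to $c_1$ and $c_4$; the cleaner choice is $f$–$c_2$–? ). The key point is that $f$ and $f'$ each see all of $c_1,\ldots,c_4$ but not $c_5$, so picking $c_5$ as an endpoint, $c_1$ (adjacent to $c_5$) as the next vertex, then $f$, then $f'$, then $c_3$ (adjacent to $f'$ but not to $c_5$, $c_1$, or — wait, $c_3$ is not adjacent to $c_1$) gives the path $c_5$–$c_1$–$f$–$f'$–$c_3$; here $c_1$ is adjacent to $c_5$ and $f$, $f$ is adjacent to $c_1$ and $f'$ — but $f$ is also adjacent to $c_3$, so this fails. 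Instead I would use $\{c_4,c_5,c_1,f,f'\}$: wait, let me just say the argument is a short bull-exhibition using $c_5$, an appropriate neighbor of $c_5$ on $C$, and $f,f'$; the non-adjacency of $f,f'$ to $c_5$ and their complete adjacency to $c_1,\ldots,c_4$ forces a contradiction, and I will pin down the exact five vertices in the write-up.

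Part~(iv) is the main obstacle and the real content of the theorem. I need to show $G\setminus F$ contains no triangle. A triangle in $G\setminus F$ has its three vertices among $\{c_1,\ldots,c_5\}\cup T\cup W$. I would argue by cases on how many triangle-vertices lie on $C$: since $C$ is an induced $5$-cycle it contains no triangle itself, so at most two triangle vertices are on $C$ and they must be consecutive, say $c_i,c_{i+1}$; the third vertex $v\notin F$ is then adjacent to both $c_i$ and $c_{i+1}$, making $v$ at least a $2$-neighbor — but $v\notin F$ and there are no $3$- or $5$-neighbors by Lemma~\ref{lem:G7}~(ii), so $v\in T$ is a $2$-neighbor adjacent to two consecutive vertices of $C$; by Lemma~\ref{lem:c5n}~(i) a $2$-neighbor is adjacent to $c_j$ and $c_{j+2}$ (non-consecutive), a contradiction. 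If exactly one triangle vertex is on $C$, say $c_i$, with the other two $u,v\in T\cup W$ mutually adjacent and both adjacent to $c_i$: here I would use the bull-freeness together with Lemma~\ref{lem:c5n2} and the structure of $2$-neighbors — the detailed sub-case analysis on whether $u,v$ lie in $T$ or $W$, and which vertices of $C$ they attach to, is where the work concentrates. The hardest sub-case is a triangle entirely inside $T\cup W$ with no vertex on $C$: for this I expect to need the $5$-cycle $C$ as a "probe," showing that any such triangle together with two well-chosen vertices of $C$ induces a bull or a $P_6$, using repeatedly that $W$-vertices (non-neighbors or $1$-neighbors) interact with $C$ only in the very restricted ways allowed by Lemmas~\ref{lem:c5n} and~\ref{lem:c5n2}, and that $2$-neighbors attach at distance-two pairs $c_j,c_{j+2}$. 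I anticipate that bounding the adjacencies between $T$, $W$, and $C$ — essentially a refined version of Lemma~\ref{lem:c5n} tailored to the prime $(P_6,\text{bull})$-free setting — will be the crux, and I would isolate the needed adjacency facts as intermediate claims before assembling the triangle-freeness conclusion.
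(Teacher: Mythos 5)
Your treatment of (i) and of the two claims in (ii) is essentially right, modulo one slip: for ``$f$ is not adjacent to $d$'' your first candidate set $\{f,c_1,c_5,c_4,d\}$ induces seven edges and is not a bull, though the idea is recoverable (e.g.\ $\{c_5,c_1,d,f,c_3\}$ works; the paper uses $\{x,d,c_1,f,c_3\}$), and for ``$f$ is not adjacent to $x$'' your appeal to Lemma~\ref{lem:c5n2} is a clean alternative to the paper's bull. The genuine gap is in (iii) and (iv). For (iii), no five vertices chosen from $\{f,f'\}\cup V(C)\cup\{d,x\}$ induce a bull or a $P_6$ when $f,f'$ are non-adjacent members of $F$: the two vertices $f,f'$ have identical neighbourhoods on $C$ and are both anticomplete to $\{c_5,d,x\}$, so every such five-set is either a $C_4$ through $f$--$c_i$--$f'$--$c_j$ with a pendant, or has too many edges to be a bull. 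This is exactly why you could not ``pin down the exact five vertices'': they do not exist. The paper proves (iii) by a different mechanism that your plan omits entirely: it takes an anticomponent $F'$ of $G[F]$ with $|F'|\ge 2$, uses the \emph{primality} of $G$ to obtain a vertex $t\notin F'$ distinguishing two non-adjacent $y,z\in F'$, locates $t$ in $T\cup W$ via (i) and (ii), and only then finds bulls and a $P_6$ involving $t$, $d$ and $x$.

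The same missing ingredient sinks your plan for (iv). Your elimination of triangles with two vertices on $C$ is correct and matches the paper's claim~(\ref{nok3-2}), but the remaining cases cannot be handled by probing the triangle with vertices of $C$ and citing Lemmas~\ref{lem:c5n} and~\ref{lem:c5n2}: for example, a triangle of three $2$-neighbours all attached to the same pair $c_{i-1},c_{i+1}$ produces no bull and no $P_6$ together with $C$, $d$ and $x$, so no local forbidden-subgraph analysis can rule it out. The paper eliminates such configurations by again invoking primality: it establishes that $G\setminus F$ is connected, takes a shortest path $P$ from $C$ to a triangle $R=\{u,v,w\}$ with $R$ chosen to minimize the length of $P$, considers the component $H$ of $G[N(u)]$ containing $v$ and $w$, and uses the fact that $V(H)$ is not a homogeneous set to extract an outside vertex $a$ adjacent to one of two adjacent vertices $y,z\in V(H)$ and not the other; the contradictions then come from bulls and $P_6$'s built out of $a$, $y$, $z$, the path $P$ and $C$, split according to the length of $P$. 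Neither the minimal-path setup nor the homogeneous-set step appears in your outline, and without them the argument does not close.
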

\begin{proof}
Note that $d\in T$ and $x\in W$.  Clearly the sets $\{c_1, \ldots,
c_5\}$, $F$, $T$, and $W$ are pairwise disjoint subsets of $V(G)$.  We
observe that item~(i) follows directly from the definition of the sets
$F$, $T$, $W$ and Lemma~\ref{lem:G7}~(ii).

\medskip

Now we prove item (ii).  Consider any $f\in F$.  By
Lemma~\ref{lem:G7}~(iii), $f$ is non-adjacent to $c_5$, and
consequently $f$ is complete to $\{c_1, \ldots, c_4\}$.  Then $f$ is
not adjacent to $x$, for otherwise $\{x,f,c_3, c_4,c_5\}$ induces a
bull; and $f$ is not adjacent to $d$, for otherwise
$\{x,d,c_1,f,c_3\}$ induces a bull.  Thus (ii) holds.

\medskip

Now we prove item (iii).  Suppose on the contrary that $F$ is not a
clique.  So $G[F]$ has an anticomponent whose vertex-set $F'$
satisfies $|F'|\ge 2$.  Since $G$ is prime, $F'$ is not a homogeneous
set, so there are vertices $y,z\in F'$ and a vertex $t\in
V(G)\setminus F'$ that is adjacent to $y$ and not to $z$, and since
$F'$ is anticonnected we may choose $y$ and $z$ non-adjacent.  By the
definition of $F'$, we have $t\notin F$.  By~(ii), we have $t\notin
V(C)$.  Therefore, By~(i), we have $t\in T\cup W$. \\ 
Suppose that $t\in T$, so $t$ is adjacent to $c_{i-1}$ and $c_{i+1}$
for some $i$ in (up to symmetry) $\{1,2,5\}$.  If $i=1$, then
$\{z,c_2,y,t,c_5\}$ induces a bull.  If $i=2$, then $\{t, c_3,
z,c_4,c_5\}$ induces a bull.  So $i=5$.  Then $t$ is not adjacent to
$x$, for otherwise $\{x,t,c_1,y,c_3\}$ induces a bull.  Then $x$ is a
non-neighbor of the $5$-cycle induced by $\{c_1,c_2,c_3,c_4, t\}$, and
$y$ is a $5$-neighbor of that cycle, which contradicts
Lemma~\ref{lem:G7}.  \\
Hence $t\in W$.  By Lemma~\ref{lem:c5n}~(iv), $t$ is anticomplete to
$\{c_1,c_2,c_3,c_4\}$.  Then $t$ is adjacent to each $u\in\{c_5,d\}$,
for otherwise $\{t,y,c_3,c_4,u\}$ induces a bull.  So $t$ is a
$1$-neighbor of $C$, and by Lemma~\ref{lem:c5n2}, $t$ is not adjacent
to $x$.  But then $x$-$d$-$t$-$y$-$c_3$-$z$ is an induced $P_6$.  Thus
(iii) holds.

\medskip
 
There remains to prove item~(iv).  Suppose on the contrary that
$G\setminus F$ contains a triangle, with vertex-set $R=\{u,v,w\}$.
Clearly $C$ and $R$ have at most two common vertices.  Moreover:
\begin{equation}\label{nok3-2}
\mbox{$C$ and $R$ have at most one common vertex.}
\end{equation}
Proof: Suppose on the contrary that $u,v\in V(C)$, and consequently
$w\notin V(C)$.  By Lemma~\ref{lem:c5n}~(i), $w$ is a $k$-neighbor of
$C$ for some $k\ge 3$.  Since $w\notin F$, we have $k\neq 4$, so
$k\in\{3,5\}$; but this contradicts Lemma~\ref{lem:G7}~(ii).  So
(\ref{nok3-2}) holds.

\medskip

We observe that:
\begin{equation}\label{gmfc}
\mbox{$G\setminus F$ is connected.}
\end{equation}
Proof: Suppose that $G\setminus F$ is not connected.  The component of
$G\setminus F$ that contains $C$ also contains $T$.  Pick any vertex
$z$ in another component.  By Lemma~\ref{lem:G7}~(i), the vertex $z$
must have a neighbor in $T$, a contradiction.  So (\ref{gmfc}) holds.

\medskip

By~(\ref{gmfc}) there is a path from $C$ to $R$ in $G\setminus F$.
Let $P = p_0$-$\cdots$-$p_\ell$ be a shortest such path, with $p_0 \in
V(C)$, $p_\ell = u$, and $\ell \geq 0$.  Note that if $\ell\ge 1$, the
vertices $p_1, \ldots, p_\ell$ are not in $C$.  We choose $R$ so as to
minimize $\ell$.  Let $H$ be the component of $G[N(u)]$ that contains
$v$ and $w$.  Since $G$ is prime, $V(H)$ is not a homogeneous set, so
there are two vertices $y,z\in V(H)$ and a vertex $a\in V(G)\setminus
V(H)$ such that $a$ is adjacent to $y$ and not to $z$, and since $H$
is connected we may choose $y$ and $z$ adjacent.  By the definition of
$H$, the vertex $a$ is not adjacent to $u$.

\medskip

Suppose that $\ell=0$.  So $u=p_0=c_i$ for some $i\in\{1,\ldots,5\}$.
By~(\ref{nok3-2}) the vertices $y,z$ are not in $C$ and are
anticomplete to $\{c_{i-1},c_{i+1}\}$.  So, by
Lemma~\ref{lem:c5n}~(ii), each of $y$ and $z$ is a $1$- or
$2$-neighbor of $C$.  The vertex $a$ is adjacent to $c_{i-1}$, for
otherwise $\{a,y,z,c_i,c_{i-1}\}$ induces a bull; and similarly $a$ is
adjacent to $c_{i+1}$.  Note that this implies $a\notin V(C)$.
Suppose that $a$ has no neighbor in $\{c_{i-2},c_{i+2}\}$.  Then one
of $y,z$ has a neighbor in $\{c_{i-2},c_{i+2}\}$, for otherwise
$z$-$y$-$a$-$c_{i+1}$-$c_{i+2}$-$c_{i-2}$ is an induced $P_6$.  So
assume up to symmetry that one of $y,z$ is adjacent to $c_{i+2}$.
Then both $y,z$ are adjacent to $c_{i+2}$, for otherwise
$\{c_{i+2},y,z,c_i,c_{i-1}\}$ induces a bull.  So $y$ and $z$ are
$2$-neighbors of $C$, and they are not adjacent to $c_{i-2}$.  But
then $\{a,y,z,c_{i+2},c_{i-2}\}$ induces a bull, a contradiction.
Hence $a$ has a neighbor in $\{c_{i-2},c_{i+2}\}$.  By
Lemma~\ref{lem:c5n}~(ii) and Lemma~\ref{lem:G7}~(ii), $a$ must be adjacent to both
$c_{i-2},c_{i+2}$, so $a$ is a $4$-neighbor of $C$.  Hence $a\in F$,
and $i=5$, and by~(iii) $a$ has no neighbor in $\{d,x\}$.  The vertex
$z$ is not adjacent to $c_2$, for otherwise $\{z,c_2,c_1,a,c_4\}$
induces a bull; and similarly $z$ is not adjacent to $c_3$.  Then $y$
is not adjacent to $c_2$, for otherwise $\{c_4,c_5,z,y,c_2\}$ induces
a bull; and similarly $y$ is not adjacent to $c_3$.  So $y$ and $z$
are $1$-neighbors of $C$, and by Lemma~\ref{lem:c5n2} they are not
adjacent to $x$.  Then $d$ is adjacent to $y$, for otherwise
$\{d,c_1,c_2,a,y\}$ induces a bull, and $d$ is not adjacent to $z$,
for otherwise $\{x,d,z,y,a\}$ induces a bull; but then
$z$-$y$-$d$-$c_1$-$c_2$-$c_3$ is an induced $P_6$, a contradiction.
Therefore $\ell\ge 1$.

\medskip

We deduce that:
\begin{equation}\label{ci1}
\mbox{Every vertex $c_i$ in $C$ has at most one neighbor in 
$\{u,y,z\}$.}
\end{equation}
For otherwise, $c_i$ and two of its neighbors in $\{u,y,z\}$ form a
triangle that contradicts the choice of $R$ (the minimality of
$\ell$).  Thus (\ref{ci1}) holds.

\medskip

Suppose that $\ell\ge 2$.  By Lemma~\ref{lem:c5n2} (applied to $p_1$
and $p_2$), $p_1$ is a $2$-neighbor of $C$, adjacent to $c_{i-1}$ and
$c_{i+1}$ for some $i$.  The vertex $y$ has no neighbor $c_j$ in $C$,
for otherwise the path $c_j$-$y$ contradicts the choice of $P$.  The
vertex $p_2$ has no neighbor $c_j$ in $C$, for otherwise the path
$c_j$-$p_2$-$\cdots$-$p_\ell$ contradicts the choice of $P$.  Put
$p'=p_3$ if $\ell \ge 3$ and $p'=y$ if $\ell=2$.  Then
$p'$-$p_2$-$p_1$-$c_{i+1}$-$c_{i+2}$-$c_{i-2}$ is an induced $P_6$, a
contradiction.

Therefore $\ell=1$, so $u=p_1$.  By~(i), and since $u\notin F$, $u$ is
either a $1$-neighbor or a $2$-neighbor of $C$.

Suppose that $u$ is a $1$-neighbor of $C$, adjacent to $c_i$ for some
$i$.  By~(\ref{ci1}), $y$ and $z$ are not adjacent to $c_i$.  Then $a$
is adjacent to $c_i$, for otherwise $\{a,y,z,u,c_i\}$ induces a bull.
If $a$ has a neighbor in $\{c_{i-1}, c_{i+1}\}$, then, by
Lemma~\ref{lem:c5n}~(ii) and Lemma~\ref{lem:G7}~(ii), $a$ is a
$4$-neighbor of $C$; but then $a$ and $u$ violate
Lemma~\ref{lem:c5n}~(iv).  So $a$ has no neighbor in $\{c_{i-1},
c_{i+1}\}$.  Then $z$ is not adjacent to $c_{i+1}$, for otherwise, by
(\ref{ci1}), $\{a,y,u,z,c_{i+1}\}$ induces a bull; and $z$ has no
neighbor $c$ in $\{c_{i-2},c_{i+2}\}$, for otherwise, by (\ref{ci1}),
$\{c_i, u, y, z, c\}$ induces a bull.  But then
$z$-$u$-$c_i$-$c_{i+1}$-$c_{i+2}$-$c_{i-2}$ is an induced $P_6$, a
contradiction.

Therefore $u$ is a $2$-neighbor of $C$, adjacent to $c_{i-1}$ and
$c_{i+1}$ for some $i$.  By~(\ref{ci1}), $y$ and $z$ are anticomplete
to $\{c_{i-1},c_{i+1}\}$.  The vertex $c_{i+2}$ has no neighbor in
$\{y,z\}$, for otherwise, by (\ref{ci1}), $\{c_{i+2},y,z,u,c_{i-1}\}$
induces a bull.  Likewise, $c_{i-2}$ has no neighbor in $\{y,z\}$.
The vertex $a$ is adjacent to $c_{i-1}$, for otherwise
$\{a,y,z,u,c_{i-1}\}$ induces a bull, and similarly $a$ is adjacent to
$c_{i+1}$.  Then $a$ has a neighbor in $\{c_{i-2},c_{i+2}\}$, for
otherwise $z$-$y$-$a$-$c_{i+1}$-$c_{i+2}$-$c_{i-2}$ is an induced
$P_6$.  By Lemma~\ref{lem:c5n}~(ii) and Lemma~\ref{lem:G7}~(ii), $a$
is a $4$-neighbor of $C$, so $i=5$, and $a$ has no neighbor in
$\{c_5,d,x\}$.  Then $y$ is adjacent to $c_5$, for otherwise
$\{y,a,c_3,c_4,c_5\}$ induces a bull; and by (\ref{ci1}), $z$ is not adjacent to
$c_5$. But then $z$-$y$-$c_5$-$c_4$-$c_3$-$c_2$ is an induced $P_6$, a
contradiction.  This completes the proof of the theorem.  \qed
\end{proof}
Finally, Theorem~\ref{thm:gmf} follows as a direct consequence of
Lemma~\ref{lem:G7} and Theorem~\ref{thm:structure}.

\section{Conclusion}

In a parallel paper \cite{MP1}, but using other techniques, we proved
that the problem of deciding if a $(P_6,\mbox{bull})$-free graph is
$4$-colorable can be solved in polynomial time.  It is not known if
there exists a polynomial-time algorithm that determines
$4$-colorability in the whole class of $P_6$-free graphs.

We note that the whole class of $(P_6,\mbox{bull})$-free graph does
not have bounded clique-width, since it contains the class of
complements of bipartite graphs, which has unbounded clique-width
\cite{GR}.  Hence the main result of this paper and of \cite {MP1}
cannot be obtained solely with a bounded clique-width argument.


\end{document}